\newtheorem{theorem}{Theorem}[section]
\newtheorem{theorem*}{Theorem A\!\!}
\newtheorem{proposition}{Proposition}[section]
\newtheorem{proposition*}{Proposition A\!\!}
\newtheorem{corollary*}{Corollary A\!\!}
\newtheorem{lemma}{Lemma}[section]
\DeclareMathOperator{\sgn}{sgn}
\begin{document}
\title {Conformally covariant differential operators for the diagonal action of $O(p,q)$ on real quadrics}

\author{Jean-Louis Clerc}
\date{ }
\maketitle
\begin{abstract} Let $X=G/P$ be a real projective quadric, where $G=O(p,q)$ and $P$ is a parabolic subgroup of $G$. Let $\left(\pi_{\lambda,\epsilon}, \mathcal H_{\lambda,\epsilon}\right)_{ (\lambda,\epsilon)\in \mathbb C\times \{\pm\}}$ be the family of (smooth) representations of $G$ induced from the characters of $P$. For $(\lambda, \epsilon), (\mu, \eta)\in \mathbb C\times \{\pm\}$,  a differential operator $\mathbf D_{(\lambda,\epsilon), (\mu,\eta)}^{reg}$ on $X\times X$, acting $G$-covariantly from  $\mathcal H_{\lambda,\epsilon} \otimes \mathcal H_{\mu, \eta}$ into 
$\mathcal H_{\lambda+1,-\epsilon} \otimes \mathcal H_{\mu+1, -\eta}$ is constructed.
\end{abstract}
\section*{Introduction}

Let $S=S^n$ be the sphere of dimension $n$, equipped with its standard Riemannian structure. The group $G=O(1,n+1)$ acts conformally on $S$. For $\lambda\in \mathbb C$, let 
\[\mathcal H_\lambda=\{ f(x) (dx)^{\frac{\lambda}{n}},\quad f\in C^\infty(S)\}\] 
be the space of smooth $\frac{\lambda}{n}$-densities. The space $C^\infty(S)$  correspond to $\lambda=0$, whereas the space of measures on $S$ having a smooth density with respect to the Lebesgue measure $dx$ on $S$ corresponds to $\lambda = n$. The natural action of $G$ on $\mathcal H_\lambda$ induces a (smooth) representation $\pi_\lambda$ of $G$ on $\mathcal H_\lambda$. The family $(\pi_\lambda)_{\lambda\in \mathbb C}$ is known in semisimple harmonic analysis as the \emph{scalar principal series} of representations of $G$.

Now let $G$ act diagonally on $S\times S$. The tensor product $\pi_\lambda\otimes \pi_\mu$ of two representations of the principal series has a natural realization on  a space $\mathcal H_{\lambda, \mu}$ of sections of a certain line bundle over $S\times S$. In \cite{bc}, R. Beckmann and the present author constructed a family of differential operators on $S\times S$, depending on two complex parameters $(\lambda, \mu)$, which are covariant with respect to $(\pi_\lambda\otimes \pi_\mu, \pi_{\lambda+1}\otimes \pi_{\mu+1})$. The construction of these operators uses the heavy machinery of Knapp-Stein intertwining operators (see \cite{kn} for a general presentation). Whereas the covariance property of the operators is intrinsic to their definition, the fact that they are \emph{differential operators} is much more involved.  The problem is transferred (by using a stereographic projection) to the \emph{non-compact picture} or \emph{flat model} $\mathbb R^n\times \mathbb R^n$, and is solved through a long computation, using the Fourier transform on $\mathbb R^n$. See also \cite{c} Section 11 for a slightly different presentation of these results. This procedure was generalized recently to the geometric framework of \emph{completion of  simple real Jordan algebras} (see \cite{bck}).

The present paper gives a more elementary construction of these operators in the geometric setting of the real quadrics. The philosophy behind the present construction is based on the following observation. Let $X$ be a real quadric, and let $G$ be its group of conformal transformations. Then $G$ has an open dense orbit in  its diagonal action on $X\times X$ which is a \emph{reductive symmetric space} (see Proposition \ref{openorbit} for a more explicit statement). This rich underlying geometric structure explains that it is easy to construct $G$-covariant differential operators on this open orbit. The next question is to study wether such a differential operator can be smoothly extended to $X\times X$. 

The real quadric $X$ is realized as the projective variety associated to the isotropic  cone $\Xi$ of the \emph{ambient space} $(V,Q)$, where $Q$ is a quadratic form on a real vector $V$. The group of conformal transformations of $X$ is $O(Q)$, acting projectively on $X$. To construct covariant differential operators on $X$ (or on $X\times X$), it is wise to start with a homogenous $G$-invariant differential operator on $V$ (or $V\times V$) and try to induce a differential operator on $\Xi$ (or $\Xi\times \Xi$). This is possible only if the operator on $V$ is \textquotedblleft tangential along  $\Xi$\textquotedblright (or along $\Xi\times \Xi)$. The corresponding verification is obtained through computations in the Weyl algebra ($=$ algebra of differential operators with polynomial coefficients) of $V$. 

To finish this introduction, let us mention an application of these operators, which is not developed in this article. By restriction to the diagonal, they provide \emph{covariant bi-differential operators} from $X\times X$ to  $X$. As it is possible to compose (appropriate) covariant differential operators on $X\times X$, the restriction process also yields higher order covariant bi-differential operators. These bi-differential operators are generalizations of the classical \emph{Rankin-Cohen brackets} (see \cite{bc} Theorem 3.4 or \cite{bck} Section 8). A similar approach for Juhl's conformally covariant differential operators from $S^n$ to $S^{n-1}$ was proposed in \cite{c2}.

\section{The real quadric and a series of representations of $O(p,q)$}

Let $V$ be a real vector space of dimension $n=p+q$ where $p,q$ are natural integers such that $p,q\geq 1, p+q\geq 3$
, and let $Q$ be a quadratic form on $V$ of signature  $(p,q)$. Choose a basis $e_1,e_2,\dots, e_p,e_{p+1},\dots, e_n$ such that the quadratic form $Q$ is given by
\[Q(\mathbf v) = Q(x_1,x_2,\dots, x_p,x_{p+1},\dots, x_n) = x_1^2+x_2^2+\dots +x_p^2-x_{p+1}^2-\dots -x_n^2\ .
\]
The corresponding symmetric bilinear form will be denoted also by $Q$, namely for $\mathbf v = (x_1,x\dots,x_p,x_{p+1},\dots, x_n)$ and $\mathbf w = (y_1,\dots, y_p,y_{p+1},\dots, y_n)$
\[Q(\mathbf v, \mathbf w) = x_1y_1+\dots+x_py_p-x_{p+1}y_{p+1}-\dots-x_ny_n\ .
\]

For $\mathbf v\in V, \mathbf v\neq 0$, let $[\mathbf v]= \mathbb R^*\mathbf  v$ be its corresponding element in the projective space $\mathbb P(V)$.

Consider the \emph{proper isotropic cone} \[\Xi = \{ \mathbf v\in V, \mathbf v\neq 0,\quad Q(\mathbf v) = 0\}\ .\]
For $\mathbf v\neq 0$, the differential $dQ(\mathbf v)=2Q(\mathbf v, .)$ is $\neq 0$ and hence $Q=0$ is a regular equation of $\Xi$ near any point of $\Xi$. The projective quotient  $X=\Xi/\mathbb R^*$ is a real quadric.

The group $G=O(Q)\simeq O(p,q)$ preserves $\Xi$. As the action of $G$ commutes with the dilations, the group $G$ acts naturally on $X$. As a consequence of Witt theorem, this action is transitive.

 An open subset $\mathcal O$ (resp. $\Omega$) of $V\smallsetminus\{0\}$ (resp. $\Xi$) is said to be \emph{conical} if $\mathcal O$ (resp. $\Omega$) is stable by all dilations $\mathbf v\mapsto r\mathbf v, r\in \mathbb R^*$.
 
 For $\lambda\in \mathbb C, \epsilon\in \{ \pm\}$ and for $r\in \mathbb R^*$, let  
\[r^{\lambda, \epsilon} = \left\{\begin{matrix} \vert r\vert^\lambda& \text{ if }\epsilon = +\\\sgn(r)\vert r\vert^\lambda& \text{ if } \epsilon = -\end{matrix}\right.\qquad .
\]
 Let $\mathcal O$ be a conical open subset of $V\smallsetminus \{0\}$, and let $(\lambda, \epsilon) \in \mathbb C\times \{\pm\}$. Set
 \[\mathcal F_{\lambda, \epsilon}(\mathcal O) = \{ F\in C^\infty(\mathcal O), F(r\mathbf v) = r^{-\lambda, \epsilon} F(\mathbf v)\quad \forall r\in \mathbb R^*,\mathbf v \in \mathcal O\}\ .
 \]
Similarly, for $\Omega$ a conical open subset of $\Xi$, let
 \[\mathcal H_{\lambda, \epsilon}(\Omega)=\{ F\in C^\infty(\Omega), F(r\mathbf v) = r^{-\lambda, \epsilon}F(\mathbf v),\quad \forall r\in \mathbb R^*,\mathbf v \in \Omega\}, \]
 and  simply let $\mathcal H_{\lambda, \epsilon} = \mathcal H_{\lambda, \epsilon}(\Xi)$, equipped with its natural Fr\'echet topology.
 
  For $g\in G$, and $F\in \mathcal H_{\lambda, \epsilon}$, let
 \[\pi_{\lambda, \epsilon}(g) F = F\circ g^{-1}\ .
 \]
 Then $\pi_{\lambda, \epsilon}(g) F$ belongs to $\mathcal H_{\lambda, \epsilon}$ and  this defines  a (smooth) representation $\pi_{\lambda, \epsilon}$ of $G$ on $\mathcal H_{\lambda, \epsilon}$.

Homogenous functions on $\Xi$ are interpreted as sections of a corresponding line bundle on $X$, and conversely, differential operators for these line bundles over $X$ are viewed as  differential operators acting on homogenous functions on $\Xi$. These identifications are tacitly used in the sequel.

 \section{The covariant differential operator $\widetilde \square$}
 
 Let $q\in \mathbb C[V]$ be a polynomial on $V$. There is a unique constant coefficients  differential operator, denoted by $\displaystyle q\left(\frac{\partial}{\partial \mathbf x}\right)$ such that for any $\mathbf y\in V$
 \[ q\left(\frac{\partial}{\partial \mathbf x}\right) e^{Q(\mathbf x, \mathbf y)} = q(\mathbf y) e^{Q(\mathbf x, \mathbf y)}\ .
 \]
The operator $\displaystyle q\left(\frac{\partial}{\partial \mathbf x}\right)$ is $G$-invariant (i.e. commutes with the action of $G$) if and only if $q$ is a $G$-invariant polynomial on $V$. Choosing $q=Q$, this yields the \emph{d'Alembertian operator}
$\displaystyle \square = Q\left(\frac{\partial}{\partial \mathbf x}\right)$, which in the coordinates $(x_1,x_2,\dots, x_n)$ reads
 \[\square = \frac{\partial^2}{\partial x_1^2}+\dots+ \frac{\partial^2}{\partial x_p^2}- \frac{\partial^2}{\partial x_{p+1}^2}-\dots - \frac{\partial^2}{\partial x_n^2}\ .
 \]  
 The \emph{Weyl algebra} is the algebra of differential operators on $V$ having polynomial coefficients. For $q\in \mathbb C[V]$, the multiplication operator by $q$ is simply denoted by $q$ or $q(\mathbf x)$, depending on the context. The composition of operators in the Weyl algebra is usually denoted by $\circ$. However when multiplication by a polynomial is performed \emph{after} a constant coefficient differential operators, the symbol $\circ$ may be omitted.
 
The construction of covariant differential operators on the quadric $X$ is well-known (see e.g. \cite{es}) and is recalled here, as it is used and serves as a model for the more elaborate  constructions to come.
\begin{lemma}\label{SQG}
  \[\square\circ Q = 2n+4\mathbf E  +Q\,\square ,
 \]
 where $\mathbf E$ is the \emph{Euler operator} given by
 $\displaystyle\mathbf E = \mathbf E\left(\mathbf x, \frac{\partial}{\partial \mathbf x} \right)=\sum_{j=1}^n x_j\frac{\partial}{\partial x_j}$.
 
 \end{lemma}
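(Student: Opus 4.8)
The plan is to verify the identity by a direct computation in the Weyl algebra of $V$, expanding both sides in the coordinates $(x_1,\dots,x_n)$. Writing $\partial_j = \frac{\partial}{\partial x_j}$ and $\sigma_j = +1$ for $1\le j\le p$, $\sigma_j = -1$ for $p+1\le j\le n$, we have $\square = \sum_j \sigma_j \partial_j^2$ and $Q = Q(\mathbf x) = \sum_j \sigma_j x_j^2$ (as a multiplication operator, using $\sigma_j^2 = 1$). The only non-trivial input is the commutator $[\partial_j, x_k] = \delta_{jk}$, equivalently $\partial_j^2 \circ x_k = x_k \partial_j^2 + 2\delta_{jk}\partial_j$ as operators.

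First I would compute $\square \circ Q = \bigl(\sum_j \sigma_j \partial_j^2\bigr)\circ\bigl(\sum_k \sigma_k x_k^2\bigr) = \sum_{j,k}\sigma_j\sigma_k\, \partial_j^2\circ x_k^2$. Using $\partial_j^2 \circ x_k^2 = x_k^2\,\partial_j^2 + 4\delta_{jk} x_j \partial_j + 2\delta_{jk}$, the off-diagonal terms ($j\neq k$) contribute $\sum_{j\neq k}\sigma_j\sigma_k x_k^2 \partial_j^2$, and the diagonal terms ($j=k$, where $\sigma_j\sigma_k = \sigma_j^2 = 1$) contribute $\sum_j \bigl(x_j^2\partial_j^2 + 4 x_j\partial_j + 2\bigr)$. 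Recombining the $x_k^2\partial_j^2$ pieces over all $j,k$ gives exactly $Q\,\square$, while the remaining diagonal pieces sum to $4\sum_j x_j\partial_j + 2n = 4\mathbf E + 2n$. This yields $\square\circ Q = Q\,\square + 4\mathbf E + 2n$, which is the claimed identity.

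There is no real obstacle here: the computation is entirely elementary once one is careful to keep track of the sign factors $\sigma_j$ and to notice that they cancel on the diagonal ($\sigma_j^2 = 1$), which is why the Euler operator and the constant $2n$ appear without signs. The one point deserving a moment's care is the bookkeeping in passing from $\partial_j^2 \circ x_j^2$ to $x_j^2\partial_j^2 + 4x_j\partial_j + 2$: this follows by applying $[\partial_j, x_j] = 1$ twice, e.g.\ $\partial_j x_j^2 = x_j^2 \partial_j + 2x_j$ and then $\partial_j(x_j^2\partial_j + 2x_j) = x_j^2\partial_j^2 + 2x_j\partial_j + 2x_j\partial_j + 2 = x_j^2\partial_j^2 + 4x_j\partial_j + 2$. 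Alternatively, one can avoid coordinates altogether by using the defining property $q(\partial/\partial\mathbf x)\,e^{Q(\mathbf x,\mathbf y)} = q(\mathbf y)\,e^{Q(\mathbf x,\mathbf y)}$: apply both sides of the asserted identity to $e^{Q(\mathbf x,\mathbf y)}$, use $\square\, e^{Q(\mathbf x,\mathbf y)} = Q(\mathbf y)e^{Q(\mathbf x,\mathbf y)}$ and $Q(\mathbf x)\,e^{Q(\mathbf x,\mathbf y)}$ on the right, together with $\square\bigl(Q(\mathbf x)e^{Q(\mathbf x,\mathbf y)}\bigr) = \bigl(2n + 4Q(\mathbf x,\mathbf y) + Q(\mathbf x)Q(\mathbf y)\bigr)e^{Q(\mathbf x,\mathbf y)}$ on the left — the latter being the same elementary commutator computation in slightly disguised form — and match coefficients; since $\mathbf E\, e^{Q(\mathbf x,\mathbf y)} = Q(\mathbf x,\mathbf y)e^{Q(\mathbf x,\mathbf y)}$, this reproduces the identity. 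I would present the coordinate computation as the primary argument for concreteness.
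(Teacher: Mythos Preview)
Your proof is correct and is precisely the ``straightforward computation'' the paper alludes to; the coordinate expansion with the sign bookkeeping $\sigma_j^2=1$ is exactly what is meant. There is nothing to add.
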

\begin{proof} 
Straightforward computation.
\end{proof}
 Let $F\in \mathcal H_{\lambda, \epsilon}$. It is possible to extend $F$  to a function $\overline F \in \mathcal F_{\lambda, \epsilon}(\mathcal O)$ for $\mathcal O$ a conical neighborhood of  $\Xi$ in $V\smallsetminus \{0\}$. The restriction of $\square \overline{F}$ to $\Xi$ belongs to $\mathcal H_{\lambda+2,\epsilon}$. However, the extension is not unique and the  restriction of $\square \overline F$ to $\Xi$ usually depends of the extension.

\begin{proposition}\label{sqxi}
 Let $F\in \mathcal H_{\frac{n}{2}-2, \epsilon}$. Let $\overline F$ an extension of $F$ to a conical neighborhood of $\Xi$ as above.Then the restriction of $\square \overline{F}$ to $\Xi$ only depends on the values of $F$ on $\Xi$.
 \end{proposition}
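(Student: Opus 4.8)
The plan is to reduce the statement to Lemma~\ref{SQG} by comparing two extensions. Let $\overline F_1$ and $\overline F_2$ be two extensions of $F$ as in the statement; replacing their domains by the common intersection (which is again a conical neighborhood of $\Xi$), I may assume both lie in $\mathcal F_{\frac n2-2,\epsilon}(\mathcal O)$ for one fixed conical neighborhood $\mathcal O$ of $\Xi$. Then $G:=\overline F_1-\overline F_2$ belongs to $\mathcal F_{\frac n2-2,\epsilon}(\mathcal O)$ and vanishes identically on $\Xi$, so it is enough to prove that $\square G$ vanishes on $\Xi$.

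Recall that $dQ(\mathbf v)=2Q(\mathbf v,\cdot)\neq 0$ for $\mathbf v\neq 0$, so $Q=0$ is a regular equation of $\Xi$ near every point of $\Xi$. Hence, by a standard division lemma (choose local coordinates in which $Q$ is a coordinate function and apply Hadamard's lemma), every smooth function vanishing on $\Xi$ is locally of the form $Q\,H$ with $H$ smooth; since $H$ is forced to equal $G/Q$ on the complement of $\Xi$, these local factorizations glue to a single smooth function $H$ on a neighborhood $W$ of $\Xi$ with $G=Q\,H$ on $W$. Comparing homogeneities --- $G(r\mathbf v)=r^{-(\frac n2-2),\epsilon}G(\mathbf v)$ and $Q(r\mathbf v)=r^2 Q(\mathbf v)$ --- shows that $H$ is homogeneous of degree $-\tfrac n2$ (with sign $\epsilon$) on $W\smallsetminus\Xi$; differentiating in $r$ at $r=1$ yields $\mathbf E H=-\tfrac n2 H$ there, hence on $\Xi$ by continuity.

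With this in hand, Lemma~\ref{SQG} finishes the argument:
\[\square G=\square(Q\,H)=(2n+4\mathbf E)H+Q\,\square H=\big(2n+4(-\tfrac n2)\big)H+Q\,\square H=Q\,\square H,\]
and the right-hand side vanishes on $\Xi$. Therefore $\square\overline F_1$ and $\square\overline F_2$ have the same restriction to $\Xi$. I do not expect a serious obstacle: the only points needing care are the local nature of the division step (dealt with by the gluing remark) and the bookkeeping of homogeneity degrees. It is worth noting \emph{why} the value $\lambda=\frac n2-2$ is the critical one: it is exactly the weight for which $2n+4\mathbf E$ annihilates every homogeneous function of degree $-\tfrac n2$, i.e.\ every element of $\mathcal F_{\frac n2,\epsilon}$.
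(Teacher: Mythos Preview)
Your proof is correct and follows essentially the same route as the paper: reduce to the case of an extension vanishing on $\Xi$, factor it as $Q$ times a smooth function of shifted homogeneity, and apply Lemma~\ref{SQG} so that the $2n+4\mathbf E$ term cancels. Your write-up is in fact more careful than the paper's about the division step (Hadamard's lemma and gluing) and adds the helpful remark explaining why $\lambda=\frac n2-2$ is the distinguished value.
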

 
 \begin{proof} It is enough to show that if $\overline F$ vanishes on $\Xi$, then $\square \overline F$ vanishes on $\Xi$. But such a function can be written as $\overline F=QG$, where $G$ is defined in a conical neighborhood $\mathcal O$ of $\Xi$ and satisfies $G(r\mathbf x) = r^{-\frac{n}{2},\epsilon} G(\mathbf x)$ for all $r\in \mathbb R^*$ and $\mathbf x\in \mathcal O$. But differentiating this relation at $r=1$ yields
 $\mathbf E G = -\frac{n}{2} G$. Hence by Lemma \ref{SQG} 
$\square(QG) = Q\square G
$, which implies for $\mathbf x\in \Xi$\[\square F(\mathbf x) = Q(\mathbf x)\, \square G(\mathbf x) = 0\ .\]
 \end{proof} 
Proposition \ref{sqxi} defines a differential operator on $X$
 \[\widetilde \square : \qquad F\longmapsto \overline F \longmapsto \square \overline F\longmapsto \square \overline F_{\vert \Xi} ,\] 
mapping $\mathcal H_{\frac{n}{2}-2, \epsilon}$ into $\mathcal H_{\frac{n}{2}, \epsilon}$. Moreover, as $\square$ commutes with the natural action of $G$ on  functions, the operator $\widetilde \square$ intertwines $\pi_ {\frac{n}{2}-2, \epsilon}$ and $\pi_{\frac{n}{2}, \epsilon}$.

\section{The operators $\mathbf D_{(\lambda,\epsilon),(\mu,\eta)}$ on $(X\times X)^\times$.}

Let 
\[(V\times V)^\times=\{(\mathbf x, \mathbf y)\in V\times V,\quad Q(\mathbf x,\mathbf y)\neq 0\}.
\]
Clearly, $(V\times V)^\times$ is a conical dense open subset of $V\times V$, 
which is invariant under the diagonal action of $G$ on $V\times V$. Similarly, let
\[(\Xi\times \Xi)^\times = \{ (\mathbf x, \mathbf y)\in \Xi\times \Xi, \quad Q(\mathbf x, \mathbf y)\neq 0\}\ .
\]
Consider the corresponding projective situation, i.e. let
\[(X\times X)^\times=  (\Xi\times \Xi)^\times/(\mathbb R^*\times \mathbb R^*) \ .\]
The diagonal action of $G$ on $(V\times V)^\times$ induces an action  on $(X\times X)^\times$.
The next proposition will not be used in the sequel, but, as commented in the introduction, it is in the background of the construction of the covariant differential operators on $X\times X$.

\begin{proposition}\label{openorbit}
 The group $G$ acts transitively on $(X\times X)^\times$ and the stabilizer of a generic element of $(X\times X)^\times$ is a subgroup of index 2 in a reductive symmetric subgroup of $G$.
\end{proposition}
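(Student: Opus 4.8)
The plan is to work with explicit isotropic vectors in the ambient space $V$ and reduce both assertions — transitivity and the description of the generic stabilizer — to linear-algebra statements about the pairing $Q(\mathbf x,\mathbf y)$. First I would fix a convenient base point. Choose isotropic vectors $\mathbf x_0,\mathbf y_0\in \Xi$ with $Q(\mathbf x_0,\mathbf y_0)\neq 0$; rescaling $\mathbf y_0$ we may normalize $Q(\mathbf x_0,\mathbf y_0)=\tfrac12$, so that the plane $\Pi=\mathbb R\mathbf x_0\oplus\mathbb R\mathbf y_0$ is a hyperbolic plane for $Q$, hence nondegenerate of signature $(1,1)$. Then $V=\Pi\oplus \Pi^\perp$ with $\Pi^\perp$ of signature $(p-1,q-1)$. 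Given any other pair $([\mathbf x],[\mathbf y])\in (X\times X)^\times$, pick representatives with $Q(\mathbf x,\mathbf y)=\tfrac12$; the span $\mathbb R\mathbf x\oplus\mathbb R\mathbf y$ is again a hyperbolic plane, so by Witt's theorem there is $g\in O(Q)$ with $g\mathbf x_0=\mathbf x$, $g\mathbf y_0=\mathbf y$. This proves transitivity on $(\Xi\times\Xi)^\times$, and a fortiori on $(X\times X)^\times$.

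Next I would compute the stabilizer $H$ in $G$ of the projective base point $([\mathbf x_0],[\mathbf y_0])$. An element $g\in G$ fixes $[\mathbf x_0]$ and $[\mathbf y_0]$ iff $g\mathbf x_0=a\mathbf x_0$ and $g\mathbf y_0=b\mathbf y_0$ for some $a,b\in\mathbb R^*$; applying $Q(\cdot,\cdot)$ and using that $g$ preserves $Q$ forces $ab=1$, so $b=a^{-1}$. Thus $H$ consists of the $g\in O(Q)$ that act on the hyperbolic plane $\Pi$ by $\operatorname{diag}(a,a^{-1})$ in the basis $(\mathbf x_0,\mathbf y_0)$ and preserve $\Pi^\perp$, i.e. $H\cong (\mathbb R^*)\times O(p-1,q-1)$ — up to the components coming from the sign of $a$. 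Let me isolate the subgroup $H^+$ where $a>0$: this is $\mathbb R^*_{>0}\times O(p-1,q-1)$, which is exactly the fixed-point group of the involution $\sigma=\Ad(s)$ of $G$, where $s\in O(Q)$ is the linear map equal to $-\Id$ on $\Pi$ and $+\Id$ on $\Pi^\perp$ (one checks $s\in O(Q)$ since $\Pi$ is nondegenerate, $s^2=\Id$, and $gsg^{-1}=s$ exactly when $g$ preserves the eigenspace decomposition, i.e. $g\in H^+$ together with $a<0$ elements — more precisely $H^+$ is the identity component issue, so I should phrase it as: the centralizer-type subgroup $G^\sigma$ equals $H^+$). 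Hence $H^+$ is a reductive symmetric subgroup of $G$, and $H$ contains $H^+$ with index $2$, the nontrivial coset being realized by the element swapping $\mathbf x_0\leftrightarrow\mathbf y_0$ (which sends $a\mapsto -$ type, or by the element $\operatorname{diag}(-1,-1)$ on $\Pi$ when that lies in $O(Q)$).

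The one delicate point, and the place I would be most careful, is getting the index-$2$ bookkeeping exactly right: depending on parities of $p,q$ the element acting as $\operatorname{diag}(-1,-1)$ on $\Pi$ may or may not need an accompanying sign on $\Pi^\perp$ to lie in $O(Q)$ (it always does lie in $O(Q)$ since $\Pi$ is nondegenerate and $\det|_\Pi=1$, so in fact $-\Id_\Pi\oplus\Id_{\Pi^\perp}\in O(Q)$, but one must check it is \emph{not} in $G^\sigma$, which is clear since it acts by $a=-1$). So the honest statement is: $H\cap H^+$-cosets are indexed by the sign of $a$, $G^\sigma=H^+$ has index $2$ in $H$, and the extra generator is $-\Id_\Pi\oplus\Id_{\Pi^\perp}$. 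I would then note that in the projective picture the diagonal $(-\Id_V)$ acts trivially, which is why passing from $(\Xi\times\Xi)^\times$ to $(X\times X)^\times$ does not change the count. Finally I would record that $H^+\cong \mathbb R^*_{>0}\times O(p-1,q-1)$ is reductive and that $X\times X)^\times\cong G/H$ carries the structure of a reductive symmetric space up to the index-$2$ component, which is the assertion; the symmetric space involution is $\sigma$ and the associated symmetric space is $G/G^\sigma$.
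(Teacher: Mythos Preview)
Your transitivity argument is essentially identical to the paper's: normalize the pairing and invoke Witt's theorem. Fine.

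The stabilizer part, however, contains a genuine error that reverses the index-$2$ inclusion. You correctly compute that the stabilizer of $([\mathbf x_0],[\mathbf y_0])$ consists of those $g$ acting by $\operatorname{diag}(a,a^{-1})$ on $\Pi$ in the basis $(\mathbf x_0,\mathbf y_0)$, so the stabilizer is $\mathbb R^*\times O(p-1,q-1)$. But your identification of $G^\sigma$ is wrong. With $s=-\Id_\Pi\oplus\Id_{\Pi^\perp}$ (or the paper's opposite sign choice, it makes no difference for $\Ad(s)$), the condition $sgs^{-1}=g$ says exactly that $g$ preserves the eigenspace decomposition $V=\Pi\oplus\Pi^\perp$. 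That gives
\[
G^\sigma \;\simeq\; O(Q|_\Pi)\times O(Q|_{\Pi^\perp})\;\simeq\; O(1,1)\times O(p-1,q-1),
\]
not $\mathbb R^*_{>0}\times O(p-1,q-1)$. In the isotropic basis $(\mathbf x_0,\mathbf y_0)$ the group $O(1,1)$ consists of the diagonal matrices $\operatorname{diag}(a,a^{-1})$ \emph{together with} the antidiagonal matrices that swap the two isotropic lines; all of these preserve $\Pi$ and hence commute with $s$. So the symmetric subgroup is strictly larger than your $H^+$, and in fact strictly larger than the stabilizer.

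The correct picture is therefore
\[
\underbrace{\mathbb R^*\times O(p-1,q-1)}_{\text{stabilizer}}\ \subset\ \underbrace{O(1,1)\times O(p-1,q-1)}_{G^\sigma}\ ,
\]
with index $2$, the missing coset represented by any element of $O(1,1)$ exchanging the lines $[\mathbf x_0]$ and $[\mathbf y_0]$. This is exactly the statement of the proposition: the stabilizer sits \emph{inside} the symmetric subgroup with index $2$, not the other way around as you wrote. Your own parenthetical (``$g\in H^+$ together with $a<0$ elements\ldots'') already signals that something is off; the fix is simply to recompute $G^\sigma$ correctly.
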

\begin{proof}
Let $(\mathbf x ,\mathbf y)$ be a representative of an element in $(X\times X)^\times$. Without loosing any generality, it is possible to assume that $Q(\mathbf x, \mathbf y)=1$. Similarly, let $[\mathbf x', \mathbf y']$ be a representative of another element of $(X\times X)^\times$, such that $Q(\mathbf x', \mathbf y')=1$. Recall that $Q(\mathbf x) = Q(\mathbf x')=0$ and $Q(\mathbf y)=Q(\mathbf y')=0$. By Witt theorem, there exists an isometry $g$ of $(V,Q)$ such that $g(\mathbf x)=\mathbf x', g(\mathbf y)= \mathbf y'$, thus proving the first part of the proposition

Next, let $(\mathbf x,\mathbf y)\in (V\times V)^\times$. The restriction of $Q$ to the 2-subspace $\mathbb R\mathbf x\oplus \mathbb R \mathbf y$ is of signature $(1,-1)$. Hence 
\[V=(\mathbb R\mathbf x\oplus \mathbb R \mathbf y) \oplus (\mathbb R\mathbf x\oplus \mathbb R \mathbf y)^\perp\ ,
\]
and the restriction of $Q$ to $(\mathbb R\mathbf x\oplus \mathbb R \mathbf y)^\perp$ is of signature $(p-1,q-1)$. Let $\sigma$ be the transformation which is $+1$ on $\mathbb R\mathbf x\oplus \mathbb R \mathbf y$ and $-1$ on $(\mathbb R\mathbf x\oplus \mathbb R \mathbf y)^\perp$. If $g\in G$ stabilizes both $[\mathbf x]$ and $[\mathbf y]$, then $g$ stabilizes $\mathbb R\mathbf x\oplus \mathbb R \mathbf y$ and its orthogonal subspace $(\mathbb R\mathbf x\oplus \mathbb R \mathbf y)^\perp$, so that $\sigma \circ g\circ \sigma=  g$. Let \[
H = \{ g\in G, \sigma \circ g\circ \sigma\} \ .\]
Then $H\simeq O(1,-1)\times O(p-1,q-1)$ is a symmetric reductive subgroup of $G$, and the stabiliser $G^{[\mathbf x], [\mathbf y]}$ of $([\mathbf x],[\mathbf y]$ in $G$ is the subgroup of $H$ of index $2$, isomorphic to $\mathbb R^*\times O(p-1,q-1)$.
\end{proof}
Let $(\lambda, \epsilon),(\mu,\eta) \in \mathbb C\times \{\pm\}$. For $\mathcal O$ a conical open set of $(V\times V)^\times$, let
\[\mathcal F_{(\lambda, \epsilon)(\mu, \eta)}(\mathcal O) = \{ F\in C^\infty(\mathcal O),\quad F(r\mathbf x, s\mathbf y) = r^{-\lambda, \epsilon}s^{-\mu, \eta} F(\mathbf x, \mathbf y)\}
\]
for all $(\mathbf x, \mathbf y)\in \mathcal O$ and $r,s\in \mathbb R^*$.
Similarly, for $\Omega$ a conical open subset of $(\Xi\times \Xi)^\times$ let $\mathcal H_{(\lambda, \epsilon)(\mu, \eta)}(\Omega) $ be the space of all functions $ F\in C^\infty(\Omega)$ such that 
\[F(r\mathbf x, s\mathbf y) = r^{-\lambda, \epsilon}s^{-\mu, \eta} F(\mathbf x, \mathbf y),\quad  \text {for all }(\mathbf x, \mathbf y)\in \Omega \text{ and } r,s\in \mathbb R^*\ .
\]

The space corresponding to $\Omega = (\Xi\times \Xi)^\times$ is denoted by
$\mathcal H_{(\lambda, \epsilon), (\mu,\eta)}^\times$. The diagonal action of $G$ on $\Xi\times \Xi$ induces a representation of $G$ on $\mathcal H_{(\lambda, \epsilon), (\mu, \eta)}^\times$.

Consider the differential operator on $(V\times V)^\times$ given by
\[\mathbf E_{\lambda, \mu} =\sgn(Q(\mathbf x, \mathbf y)) \quad \times\]\[\vert Q(\mathbf x,\mathbf y)\vert^{+\frac{n}{2}-\mu-1}\circ \square\left(\frac{\partial}{\partial \mathbf y}\right)\circ \vert Q(\mathbf x,\mathbf y)\vert^{-\lambda+\mu} \circ \square\left(\frac{\partial}{\partial \mathbf x}\right)\circ \vert Q(\mathbf x, \mathbf y)\vert^{-\frac{n}{2}+2+\lambda}\ .
\]
The operator $\mathbf E_{\lambda, \mu}$ is well defined on $C^\infty\big((V\times V)^\times\big)$, and commutes with the diagonal action of $G$ on $(V\times V)^\times$. Let $F\in \mathcal H_{(\lambda, \epsilon), (\mu, \eta)}^\times$. Extend it to a function $\overline F\in \mathcal F_{(\lambda, \epsilon),(\mu, \eta)}(\mathcal O)$ where $\mathcal O$ is conical neigborhood of $(\Xi\times \Xi)^\times$ in $(V\times V)^\times$.
For $\mathbf y$ fixed, the function
\[G_{\mathbf y} : \mathbf x\longmapsto \vert Q(\mathbf x, \mathbf y)\vert^{-\frac{n}{2}+2+\lambda}
\overline F(\mathbf x, \mathbf y)\] is defined and smooth on a conical neigborhood of
$\Xi_{\mathbf y} = \{ \mathbf x\in \Xi, Q(\mathbf x, \mathbf y)\neq 0\}$ and homogenous of degree $-\frac{n}{2}+2$. Hence, by (a localized version of) Proposition \ref{sqxi}, the restriction  to $\Xi_{\mathbf y}$ of
 $\square\left(\frac{\partial}{\partial \mathbf x}\right) G_{\mathbf y}$  depends only on the values of $\overline F$ on $\Xi_{\mathbf y} $.
 
 Similarly, for $\mathbf x$ fixed the function
 \[H_{\mathbf x} : \mathbf y\longmapsto \vert Q(\mathbf x,\mathbf y)\vert^{-\lambda+\mu}\, \,\square\left(\frac{\partial}{\partial \mathbf x}\right)\left(\vert Q(\mathbf x, \mathbf y)\vert^{-\frac{n}{2}+2+\lambda}\overline F(\mathbf x, \mathbf y)\right)
 \]
is defined and smooth on a conical neighborhood of $\Xi_{\mathbf x}$ and homogeneous of degree $-\frac{n}{2}+2$. Hence, by the same argument as above, the restriction to $\Xi_{\mathbf x}$ of the function $\square\left(\frac{\partial}{\partial \mathbf y}\right)H_\mathbf x$ depends only of the values of $\overline F$ on $\Xi_{\mathbf x}$.

These observations and some elementary verifications about the homogeneity and the action of $G$ yields the following proposition. 
\begin{proposition}\label{Elambdamutan}
 The operator $\mathbf E_{\lambda, \mu}$ induces a differential operator
 \[\mathbf D_{(\lambda,\epsilon),( \mu,\eta)} :  \mathcal H_{(\lambda, \epsilon),(\mu, \eta)}^\times \longrightarrow  \mathcal H_{(\lambda+1, -\epsilon),(\mu+1, -\eta)}^\times\ .\]
 The induced operator commutes with the natural actions of $G$ on each of the fonction spaces involved.
\end{proposition}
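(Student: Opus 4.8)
The plan is to reduce the statement to three checks and then assemble them. First, that $\mathbf E_{\lambda,\mu}$ commutes with the diagonal action of $G$ on $C^\infty\big((V\times V)^\times\big)$; second, that $\mathbf E_{\lambda,\mu}$ carries $\mathcal F_{(\lambda,\epsilon),(\mu,\eta)}(\mathcal O)$ into $\mathcal F_{(\lambda+1,-\epsilon),(\mu+1,-\eta)}(\mathcal O)$ for every conical $\mathcal O\subset (V\times V)^\times$; and third, that for $F\in\mathcal H^\times_{(\lambda,\epsilon),(\mu,\eta)}$ the restriction $(\mathbf E_{\lambda,\mu}\overline F)\big|_{(\Xi\times\Xi)^\times}$ is independent of the conical extension $\overline F$ of $F$. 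Granting these, one \emph{defines} $\mathbf D_{(\lambda,\epsilon),(\mu,\eta)}F=(\mathbf E_{\lambda,\mu}\overline F)\big|_{(\Xi\times\Xi)^\times}$; the second check identifies the target space, and $G$-equivariance of $\mathbf D_{(\lambda,\epsilon),(\mu,\eta)}$ is inherited from the first, since $\overline F\circ(g^{-1}\times g^{-1})$ is again a conical extension (of $\pi(g)F$, of the same bidegree, on the conical neighborhood $(g\times g)\mathcal O$) and $\mathbf E_{\lambda,\mu}$ commutes with $g$.

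The first check is immediate: $Q(\mathbf x,\mathbf y)$ is a $G$-invariant function on $V\times V$, so $\sgn Q(\mathbf x,\mathbf y)$ and every power $|Q(\mathbf x,\mathbf y)|^{s}$ are $G$-invariant; the constant-coefficient operators $\square(\partial/\partial\mathbf x)$ and $\square(\partial/\partial\mathbf y)$ are $G$-invariant because $Q$ is a $G$-invariant polynomial (Section 2); and $\mathbf E_{\lambda,\mu}$ is a composition of these $G$-equivariant operators.

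The second check is bookkeeping of homogeneity degrees, done one factor at a time from right to left, using that multiplication by $|Q(\mathbf x,\mathbf y)|^{s}$ adds $s$ to both the $\mathbf x$- and the $\mathbf y$-degree and is even in each variable, that multiplication by $\sgn Q(\mathbf x,\mathbf y)\,|Q(\mathbf x,\mathbf y)|^{s}$ does the same on degrees but is odd in each variable, and that $\square(\partial/\partial\mathbf x)$ lowers the $\mathbf x$-degree by $2$, is even, and leaves the $\mathbf y$-data untouched (and symmetrically for $\square(\partial/\partial\mathbf y)$, which is true because a second-order constant-coefficient operator picks up $\sgn(r)$ squared). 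Starting from $\overline F$ of $\mathbf x$-degree $-\lambda$ and $\mathbf y$-degree $-\mu$, the $\mathbf x$-degree runs $-\lambda\to-\tfrac n2+2\to-\tfrac n2\to-\tfrac n2-\lambda+\mu\to-\tfrac n2-\lambda+\mu\to-\lambda-1$ and the $\mathbf y$-degree runs $-\mu\to-\mu-\tfrac n2+2+\lambda\to-\mu-\tfrac n2+2+\lambda\to-\tfrac n2+2\to-\tfrac n2\to-\mu-1$, while the parities in the two variables stay $(\epsilon,\eta)$ except that the final, odd, factor makes them $(-\epsilon,-\eta)$. Hence $\mathbf E_{\lambda,\mu}\overline F\in\mathcal F_{(\lambda+1,-\epsilon),(\mu+1,-\eta)}(\mathcal O)$. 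Worth recording for the next step: after the first factor the $\mathbf x$-degree is $-\tfrac n2+2$ and after the third factor the $\mathbf y$-degree is $-\tfrac n2+2$ — exactly the degree to which Proposition \ref{sqxi} applies.

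The third check is the real content, and I would argue it by iterating Proposition \ref{sqxi} in the two variables, as already set up before the statement. For $\mathbf y$ fixed, $G_{\mathbf y}(\mathbf x)=|Q(\mathbf x,\mathbf y)|^{-n/2+2+\lambda}\overline F(\mathbf x,\mathbf y)$ is smooth near $\Xi_{\mathbf y}$ and homogeneous of degree $-\tfrac n2+2$ in $\mathbf x$, so the localized form of Proposition \ref{sqxi} shows that $\big(\square(\partial/\partial\mathbf x)G_{\mathbf y}\big)\big|_{\Xi_{\mathbf y}}$ depends only on $\overline F(\cdot,\mathbf y)|_{\Xi}$; since for $\mathbf x\in\Xi$ and $Q(\mathbf x,\mathbf y)\neq0$ one has $\mathbf x\in\Xi_{\mathbf y}$, it follows that, for fixed $\mathbf x\in\Xi$, the restriction $H_{\mathbf x}\big|_{\Xi_{\mathbf x}}$ depends only on $F$. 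Now $H_{\mathbf x}$ is smooth near $\Xi_{\mathbf x}$ and homogeneous of degree $-\tfrac n2+2$ in $\mathbf y$, so a second application of Proposition \ref{sqxi} gives that $\big(\square(\partial/\partial\mathbf y)H_{\mathbf x}\big)\big|_{\Xi_{\mathbf x}}$ depends only on $H_{\mathbf x}\big|_{\Xi_{\mathbf x}}$, hence only on $F$; multiplying by the outer factor $\sgn Q\,|Q|^{n/2-\mu-1}$, which is smooth on $(\Xi\times\Xi)^\times$, yields that $(\mathbf E_{\lambda,\mu}\overline F)\big|_{(\Xi\times\Xi)^\times}$ depends only on $F$. (Alternatively, mimicking the proof of Proposition \ref{sqxi}: if $\overline F$ vanishes on $(\Xi\times\Xi)^\times$ then locally $\overline F=Q(\mathbf x)G_1+Q(\mathbf y)G_2$ with $G_1,G_2$ bihomogeneous of the degrees forced by $\overline F$, and Lemma \ref{SQG} — applied in $\mathbf x$ to the first summand and in $\mathbf y$ to the second, the relevant cofactor being homogeneous of degree $-\tfrac n2$ precisely by the degree count above — shows $\mathbf E_{\lambda,\mu}$ of each summand stays divisible by $Q(\mathbf x)$, resp.\ $Q(\mathbf y)$, hence vanishes on $(\Xi\times\Xi)^\times$.) I expect this third step to be the main obstacle, and within it the subtle point: the value of $\square(\partial/\partial\mathbf x)G_{\mathbf y}$ at a point of $\Xi$ a priori still involves the extension of $\overline F$ in the $\mathbf y$-direction, so one genuinely needs the exact matching of the intermediate degrees to $-\tfrac n2+2$ in order to invoke Proposition \ref{sqxi} a second time and clear this residual dependence; everything else is the algebra of Lemma \ref{SQG} together with the degree bookkeeping.
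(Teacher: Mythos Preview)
Your proof is correct and follows exactly the paper's approach: the discussion preceding the proposition already sets up the two-step application of Proposition~\ref{sqxi} (first in $\mathbf x$ via $G_{\mathbf y}$, then in $\mathbf y$ via $H_{\mathbf x}$), and the paper then appeals to ``elementary verifications about the homogeneity and the action of $G$'' --- precisely your first and second checks. You have simply made explicit the degree bookkeeping and the subtle point about residual dependence on the $\mathbf y$-extension that the paper leaves implicit.
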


To have a better understanding of the behavior of the operator $\mathbf E_{\lambda, \mu}$ near the singular set where $Q(\mathbf x, \mathbf y) =0$, a more explicit expression of the operator $\mathbf E_{\lambda, \mu}$ is needed.

\begin{proposition}\label{explicitElambdamu}
 The following identity holds on $(V\times V)^\times$
\begin{equation*}
\begin{split}
\mathbf E_{\lambda, \mu}  =\hskip 6cm &\\
(-\frac{n}{2} +2+\lambda) (-\frac{n}{2} +1+\lambda)(-\frac{n}{2} +\mu)(-\frac{n}{2} +\mu-1) Q(\mathbf x, \mathbf y)^{ -3} Q(\mathbf x)Q(\mathbf y)\quad &
(I)\\
+ 2(-\frac{n}{2} +2+\lambda) (-\frac{n}{2} +1+\lambda)Q(\mathbf x,\mathbf y)^{-2} Q(\mathbf y) \circ \left( \sum_{j=1}^n x_j\frac{\partial }{\partial y_j}\right)\hskip 1cm& (II)\\
+ (-\frac{n}{2} +2+\lambda) (-\frac{n}{2} +1+\lambda)(2n-4+4\mu)\ Q(\mathbf x,\mathbf y)^{-1} \hskip2cm &(III)\\
+ 4(-\frac{n}{2} +2+\lambda) (-\frac{n}{2} +1+\lambda)\ Q(\mathbf x,\mathbf y)^{-1} \circ \left( \sum_{j=1}^n y_j\frac{\partial }{\partial y_j}\right)\hskip1.5cm & (IV)\\
+(-\frac{n}{2} +2+\lambda) (-\frac{n}{2} +1+\lambda)Q(\mathbf x, \mathbf y)^{-1} Q(\mathbf y) \,Q\left(\frac{\partial }{\partial \mathbf y} \right)\hskip 2cm& (V)\\
+2(-\frac{n}{2}+2+\lambda)(-\frac{n}{2}+1+\mu)(-\frac{n}{2}+\mu)\ Q(\mathbf x) Q(\mathbf x, \mathbf y)^{-2}\left(\sum_{j=1}^n y_j \frac{ \partial }{\partial x_j} \right)\quad & (VI)\\
+ 2(-\frac{n}{2}+2+\lambda)(-\frac{n}{2}+1+\mu)Q(\mathbf x, \mathbf y)^{-1}
\left(\sum_{j=1}^n x_j\frac{\partial }{\partial x_j}\right) \hskip 1.7cm & (VII)\\
+ 2(-\frac{n}{2}+2+\lambda)(-\frac{n}{2}+1+\mu)\ Q(\mathbf x, \mathbf y)^{-1}\left(\sum_{j=1}^n\sum_{k=1}^n x_jy_k \frac{\partial^2 }{\partial x_ky_j}\right)\qquad &(VIII)\\
+2(-\frac{n}{2}+2+\lambda)\left( -2 \sum_{j=1}^n \frac{ \partial^2}{\partial x_j\partial y_j}+\big(\sum_{k=1}^n y_k\frac{\partial} {\partial x_k}\big)\circ Q\left(\frac{ \partial }{\partial \mathbf y}\right)   \right)\hskip 0.8cm & (IX)\\
+(-\frac{n}{2} +2+\mu)((-\frac{n}{2} +1+\mu)\ Q(\mathbf x, \mathbf y)^{-1} Q(\mathbf x)\ Q\left(\frac{\partial}{\partial \mathbf  x} \right)\hskip1.5cm & (X)\\
+ 2(-\frac{n}{2} +2+\mu) \left(\sum_{j=1}^n x_j \frac{\partial}{\partial y_j}\right) \circ Q\left( \frac{ \partial}{\partial \mathbf x}\right)\hskip 2.5cm & (XI)\\
+Q(\mathbf x, \mathbf y) Q\left(\frac{ \partial}{\partial \mathbf y}\right) Q\left(\frac{ \partial}{\partial \mathbf x}\right)\hskip4cm & (XII)\ .
\end{split}
\end{equation*}

\end{proposition}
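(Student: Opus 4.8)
The plan is to expand $\mathbf E_{\lambda,\mu}$ directly inside the Weyl algebra of $V\times V$, peeling off the five factors of its definition from the inside outwards, using only Lemma~\ref{SQG} and a few analogous elementary identities. Write $R=Q(\mathbf x,\mathbf y)$, $\square_{\mathbf x}=\square\bigl(\tfrac{\partial}{\partial\mathbf x}\bigr)$, $\square_{\mathbf y}=\square\bigl(\tfrac{\partial}{\partial\mathbf y}\bigr)$, $\mathbf E_{\mathbf x}=\sum_{j}x_j\tfrac{\partial}{\partial x_j}$, $\mathbf E_{\mathbf y}=\sum_j y_j\tfrac{\partial}{\partial y_j}$, and set $\nu_1=-\tfrac n2+2+\lambda$, $\nu_2=-\lambda+\mu$, $\nu_3=\tfrac n2-\mu-1$, so that $\nu_1+\nu_2+\nu_3=1$. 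The key structural remark, which makes the computation uniform on all of $(V\times V)^\times$ (and so removes any concern about $\sgn$ versus $|\cdot|$, and about $(V\times V)^\times$ being disconnected), is that $R^2=|R|^2$ there: a single differentiation of a power $|R|^{\gamma}$ yields $\gamma|R|^{\gamma-2}R\,\partial R$, i.e.\ it lowers the exponent of $|R|$ by two while releasing one factor $R$ (and a factor $\partial R$, linear in the complementary variables). Hence in the full expansion of $\mathbf E_{\lambda,\mu}$ every term carries a power $|R|^{\nu_1+\nu_2+\nu_3-2k}=|R|^{1-2k}$ with $k\in\{0,1,2\}$ next to the overall factor $\sgn(R)$, and $\sgn(R)\,|R|^{1-2k}=R^{1-2k}$ because $1-2k$ is an odd integer. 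This explains why the answer carries exactly the honest rational coefficients $R^{-3},R^{-2},R^{-1},R$ of $(I)$--$(XII)$, and it lets one run the whole computation treating $|R|^\gamma$ formally by $|R|^{\gamma}|R|^{\delta}=|R|^{\gamma+\delta}$ and $\partial|R|^{\gamma}=\gamma|R|^{\gamma-2}R\,\partial R$.

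The toolkit I would establish first consists of: (i) the conjugation formula $\square_{\mathbf x}\circ|R|^{\gamma}=\gamma(\gamma-1)|R|^{\gamma-2}Q(\mathbf y)+2\gamma\,|R|^{\gamma-2}R\,\bigl(\sum_{j}y_j\tfrac{\partial}{\partial x_j}\bigr)+|R|^{\gamma}\square_{\mathbf x}$, proved in one line by Leibniz using $\tfrac{\partial R}{\partial x_j}=\varepsilon_j y_j$ (with $\varepsilon_j=+1$ for $j\le p$ and $-1$ for $j>p$) and $\sum_j\varepsilon_j y_j^2=Q(\mathbf y)$, together with its mirror image in the $\mathbf y$ variables; (ii) Lemma~\ref{SQG} in the $\mathbf y$ variables, $\square_{\mathbf y}\circ Q(\mathbf y)=2n+4\mathbf E_{\mathbf y}+Q(\mathbf y)\,\square_{\mathbf y}$; (iii) the commutation relations $\bigl[\square_{\mathbf y},\sum_j y_j\tfrac{\partial}{\partial x_j}\bigr]=2\sum_j\varepsilon_j\tfrac{\partial^2}{\partial x_j\partial y_j}$, $\bigl[\square_{\mathbf y},\sum_j x_j\tfrac{\partial}{\partial y_j}\bigr]=0$, $\square_{\mathbf x}\circ\square_{\mathbf y}=\square_{\mathbf y}\circ\square_{\mathbf x}$, $\bigl(\sum_j x_j\tfrac{\partial}{\partial y_j}\bigr)\circ Q(\mathbf y)=2R+Q(\mathbf y)\bigl(\sum_j x_j\tfrac{\partial}{\partial y_j}\bigr)$, and $[\mathbf E_{\mathbf y},|R|^{\gamma}]=\gamma|R|^{\gamma}$. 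Each of these is a routine Weyl-algebra computation of the same nature as Lemma~\ref{SQG}.

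The computation then proceeds in three strokes. First, apply (i) to $\square_{\mathbf x}\circ|R|^{\nu_1}$: three summands, with $R$-exponents $\nu_1-2,\nu_1-1,\nu_1$. Second, left-multiply by $|R|^{\nu_2}$, obtaining $\nu_1(\nu_1-1)|R|^{\beta-2}Q(\mathbf y)$, $2\nu_1|R|^{\beta-2}R\bigl(\sum_j y_j\tfrac{\partial}{\partial x_j}\bigr)$ and $|R|^{\beta}\square_{\mathbf x}$, with $\beta=\nu_1+\nu_2$. Third, apply $\square_{\mathbf y}$ to each: on the first summand use Lemma~\ref{SQG} in $\mathbf y$ together with the $\mathbf y$-conjugation formula for $|R|^{\beta-2}$; on the second use the $\mathbf y$-conjugation formula for $|R|^{\beta-2}R$ and then $\bigl[\square_{\mathbf y},\sum_j y_j\tfrac{\partial}{\partial x_j}\bigr]$; on the third use the $\mathbf y$-conjugation formula for $|R|^{\beta}$ and $\square_{\mathbf x}\circ\square_{\mathbf y}=\square_{\mathbf y}\circ\square_{\mathbf x}$. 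Finally left-multiply by $\sgn(R)|R|^{\nu_3}$, use $\nu_1+\nu_2+\nu_3=1$ and $R^2=|R|^2$ to collapse all $|R|$-exponents to the odd integers $-3,-1,1$ and delete the sign, and gather like terms. Re-expressing the accumulated scalar factors through $\nu_1=-\tfrac n2+2+\lambda$, $\nu_1-1=-\tfrac n2+1+\lambda$, $\beta-1=-\tfrac n2+1+\mu$, $\beta-2=-\tfrac n2+\mu$, $\beta-3=-\tfrac n2+\mu-1$, $\nu_3-1=\tfrac n2-\mu-2$ yields the twelve lines $(I)$--$(XII)$. Two immediate sanity checks: the part in which no derivative meets an $R$-power is $\sgn(R)|R|^{\nu_1+\nu_2+\nu_3}\square_{\mathbf y}\square_{\mathbf x}=R\,\square_{\mathbf y}\square_{\mathbf x}$, which is $(XII)$; and the part of most negative $R$-degree, obtained when all four derivatives fall on $R$-powers, is $\nu_1(\nu_1-1)(\beta-2)(\beta-3)R^{-3}Q(\mathbf x)Q(\mathbf y)$, which is $(I)$.

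The main obstacle is purely one of organization and bookkeeping: the third stroke produces on the order of a dozen-and-a-half sub-terms, and the genuine risk is sign and exponent slips. I would guard against this with the two structural facts above: $R^2=|R|^2$ pins down, term by term and before the final multiplication, which power of $|R|$ each sub-term can carry, and $\nu_1+\nu_2+\nu_3=1$ then forces it to the odd integer that tells you in advance which of $R^{-3},R^{-2},R^{-1},R$ the sub-term lands on; one can therefore sort the whole computation by the order of differentiation in $\mathbf x$ and in $\mathbf y$ (each $0$, $1$, or $2$) and verify it block by block, rather than expanding blindly.
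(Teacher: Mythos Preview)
Your proposal is correct and follows essentially the same route as the paper: the paper also computes the intermediate expression $|R|^{\nu_2}\circ\square_{\mathbf x}\circ|R|^{\nu_1}$ via your conjugation identity (i), then applies $\square_{\mathbf y}$ and the final $|R|^{\nu_3}$ factor, leaving the rest as ``a long but straightforward computation.'' The one genuine difference is in the treatment of the sign: the paper works first on the region $Q(\mathbf x,\mathbf y)>0$ where $|R|^\rho=R^\rho$, obtains the twelve terms there, and then argues separately that replacing $Q$ by $-Q$ flips the sign of each of $(I)$--$(XII)$ and of $\mathbf E_{\lambda,\mu}$ itself, so the formula extends to $Q(\mathbf x,\mathbf y)<0$; your device of writing $\partial|R|^{\gamma}=\gamma|R|^{\gamma-2}R\,\partial R$ and using $\sgn(R)\,|R|^{1-2k}R^{\ell}=R^{\,1-2k+\ell}$ handles both components at once and is a clean alternative to that parity argument.
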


\begin{proof}
Computations are first made on $\{(\mathbf x, \mathbf y)\in V\times V, Q(\mathbf x, \mathbf y)>0\}$ and it will be indicated at the end how to handle the situation when $Q(\mathbf x, \mathbf y)<0$. With this extra assumption, it is possible, for $\rho$ any complex number,  to replace  $\vert Q(\mathbf x, \mathbf y)\vert^{\rho}$ by simply $Q(\mathbf x, \mathbf y)^\rho$.  An intermediate calculation yields
\[Q(\mathbf x,\mathbf y)^{-\lambda+\mu} \circ Q\left(\frac{\partial}{\partial \mathbf x}\right)\circ Q(\mathbf x,\mathbf y)^{-\frac{n}{2}+2+\lambda}= 
\]
\[(-\frac{n}{2}+2+\lambda)(-\frac{n}{2}+1+\lambda) Q(\mathbf x,\mathbf y)^{-\frac{n}{2}+\mu}\circ Q(\mathbf y)\]
\[+2(-\frac{n}{2}+2+\lambda)Q(\mathbf x,\mathbf y)^{-\frac{n}{2}+1+\mu} \circ \big(\sum_{j=1}^n y_j \frac{\partial }{\partial x_j} \big)
\]
\[+Q(\mathbf x, \mathbf y)^{-\frac{n}{2}+2+\mu} \circ Q\left(\frac{\partial }{\partial \mathbf x}\right) \ .
\]
After a long but straightforward computation, the formula of Proposition \ref{explicitElambdamu} is obtained.

To finish the proof, it is enough to justify that the same formula is valid on the domain where $Q(\mathbf x, \mathbf y)<0$. To see this, let $Q' = -Q$ be the opposite quadratic form, and let $\mathbf E'_{\lambda, \mu}$ be the differential operator obtained from $Q'$ by the same procedure as for obtaining $\mathbf E_{\lambda, \mu}$ from $Q$. If $Q(\mathbf x, \mathbf y)<0$, $Q'(\mathbf x, \mathbf y)>0$, so that the previous computation can be used to evaluate $\mathbf E'_{\lambda, \mu}$ using $Q'$ instead of $Q$. Now each  term (from (I) to (XII) corresponding to the explicit expression of $\mathbf E'_{\lambda, \mu}$, can be rewritten using $Q=-Q'$. But each of the twelve terms labeled from $(I)$  to $(XII)$ is easily seen to be changed to its opposite when changing $Q'$ to $Q=-Q'$. The conclusion follows as it is easily seen directly from their definition that $\mathbf E'_{\lambda, \mu}=-\mathbf E_{\lambda, \mu}$.
\end{proof}.
\section{The operators $\mathbf D^{reg}_{(\lambda, \epsilon)(\mu,\eta)}$ on $X\times X$}
For reasons to be explicited later,  the term labeled  $(VIII)$ has to be written differently. Let
\[\mathbf E_{(VIII)} = Q(\mathbf x, \mathbf y)^{-1} \sum_{j=1}^n \sum_{k=1}^n x_jy_k \frac{\partial^2}{\partial x_k\partial y_j},
\]
and
\[\mathbf E \left(\mathbf x,\frac{\partial}{\partial \mathbf y}\right) =  \sum_{j=1}^n x_j\frac{\partial}{\partial y_j}
,\qquad \mathbf E\left(\mathbf y,\frac{\partial}{\partial \mathbf x} \right)=  \sum_{j=1}^n y_j\frac{\partial}{\partial x_j}\ .
\]
The following elementary commutation turns out to be the  key for the sequel.%
\begin{lemma}\label{commEVIII}
\begin{equation}
\left[\mathbf E_{(VIII)}\ ,\ Q(\mathbf x) \right]=2 \mathbf E\left(\mathbf x, \frac{\partial}{\partial \mathbf y}\right)\end{equation}

\begin{equation}
\left[Q\left(\frac{\partial}{\partial \mathbf x}, \frac{\partial}{\partial \mathbf y}\right)\ ,\  Q(\mathbf x)\right]=  2\mathbf E\left( \mathbf x, \frac{\partial}{\partial \mathbf y}\right)\ .
\end{equation}
\end{lemma}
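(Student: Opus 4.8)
The plan is to prove both commutation identities by a direct computation in the Weyl algebra of $V\times V$, treating the two equations in parallel since they have the same right-hand side. For the second identity, introduce the notation $Q\left(\frac{\partial}{\partial \mathbf x}, \frac{\partial}{\partial \mathbf y}\right) = \sum_{j,k} q_{jk}\frac{\partial}{\partial x_j}\frac{\partial}{\partial y_k}$ where $q_{jk}$ is the matrix of the bilinear form (diagonal $\pm 1$ in our coordinates), so that $Q\left(\frac{\partial}{\partial \mathbf x}, \frac{\partial}{\partial \mathbf y}\right) = \sum_{j=1}^p \frac{\partial^2}{\partial x_j\partial y_j} - \sum_{j=p+1}^n \frac{\partial^2}{\partial x_j\partial y_j}$. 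Since $\frac{\partial}{\partial \mathbf y}$ commutes with the multiplication operator $Q(\mathbf x)$, the commutator $\left[Q\left(\frac{\partial}{\partial \mathbf x}, \frac{\partial}{\partial \mathbf y}\right), Q(\mathbf x)\right]$ reduces to $\sum_{k}\left(\frac{\partial}{\partial y_k}\right)\circ\left[\sum_j q_{jk}\frac{\partial}{\partial x_j}, Q(\mathbf x)\right]$. Now $\left[\frac{\partial}{\partial x_j}, Q(\mathbf x)\right] = \frac{\partial Q}{\partial x_j}(\mathbf x) = 2Q(\mathbf x, e_j)$ as a multiplication operator, and contracting with $q_{jk}$ collapses the sum to $2x_k$ (using that $Q$ pairs the standard basis dual to itself up to signs, which cancel against the same signs in $q_{jk}$). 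Hence the commutator equals $2\sum_k x_k\frac{\partial}{\partial y_k} = 2\mathbf E\left(\mathbf x, \frac{\partial}{\partial \mathbf y}\right)$, giving equation (2).

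For the first identity, I would use the Leibniz-type expansion of a commutator over a product. Write $\mathbf E_{(VIII)} = Q(\mathbf x,\mathbf y)^{-1}\circ A$ where $A = \sum_{j,k} x_j y_k \frac{\partial^2}{\partial x_k\partial y_j}$, and note that $Q(\mathbf x,\mathbf y)^{-1}$ commutes with $Q(\mathbf x)$ (both are multiplication operators), so $\left[\mathbf E_{(VIII)}, Q(\mathbf x)\right] = Q(\mathbf x,\mathbf y)^{-1}\circ [A, Q(\mathbf x)]$. Next, $[A, Q(\mathbf x)]$ only sees the $\frac{\partial}{\partial x_k}$ factors, and since each appears only to the first order we get $[A, Q(\mathbf x)] = \sum_{j,k} x_j y_k\,\frac{\partial}{\partial y_j}\circ\left[\frac{\partial}{\partial x_k}, Q(\mathbf x)\right] = \sum_{j,k} x_j y_k\,\frac{\partial}{\partial y_j}\cdot 2Q(\mathbf x,e_k)$. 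Summing $y_k Q(\mathbf x, e_k)$ over $k$ rebuilds $Q(\mathbf x,\mathbf y)$, so this is $2 Q(\mathbf x,\mathbf y)\sum_j x_j\frac{\partial}{\partial y_j}$ — but here one must be careful about operator ordering: the factor $Q(\mathbf x,\mathbf y)$ produced is a multiplication operator standing to the right of nothing problematic, and after composing with $Q(\mathbf x,\mathbf y)^{-1}$ on the left it cancels cleanly because $Q(\mathbf x,\mathbf y)^{-1}$ commutes with both $x_j$ and $Q(\mathbf x,\mathbf y)$. Thus $\left[\mathbf E_{(VIII)}, Q(\mathbf x)\right] = 2\sum_j x_j\frac{\partial}{\partial y_j} = 2\mathbf E\left(\mathbf x, \frac{\partial}{\partial \mathbf y}\right)$, which is equation (1).

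The only genuine subtlety — and the step I would flag as the main place to be careful — is bookkeeping of operator order when the coefficient $Q(\mathbf x,\mathbf y)$ generated by the commutator must be canceled against the $Q(\mathbf x,\mathbf y)^{-1}$ prefactor: one needs that $[\frac{\partial}{\partial x_k}, Q(\mathbf x)]$ is a pure multiplication operator (no leftover derivative), which holds because $Q(\mathbf x)$ is quadratic, and that all the surviving multiplication operators $x_j$, $Q(\mathbf x,\mathbf y)$, $Q(\mathbf x,\mathbf y)^{-1}$ mutually commute. Everything else is a mechanical index contraction using $\sum_k q_{jk}(\text{sign pattern}) = $ identity and $\sum_k y_k Q(\mathbf x,e_k) = Q(\mathbf x,\mathbf y)$; no Fourier analysis or representation theory is needed, which is precisely the point of this "elementary" approach advertised in the introduction.
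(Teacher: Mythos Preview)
Your computation is correct: both identities follow from the standard Weyl-algebra relation $[\partial_{x_k},Q(\mathbf x)]=\partial Q/\partial x_k$ together with the fact that all remaining factors commute with $Q(\mathbf x)$, and your bookkeeping of the operator ordering in the first identity (so that the generated factor $Q(\mathbf x,\mathbf y)$ cancels the prefactor $Q(\mathbf x,\mathbf y)^{-1}$) is handled properly. The paper gives no proof of this lemma at all, treating it as an elementary verification, so your direct calculation is exactly the routine check the author leaves to the reader.
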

Consider the differential operator 
\[\mathbf F =  \mathbf E_{(VIII)} -Q\left( \frac{\partial}{\partial \mathbf x}, \frac{\partial}{\partial \mathbf y}\right)
\]
\begin{proposition}\label{Ftan}
\begin{equation}
\mathbf F \circ Q(\mathbf x) = Q(\mathbf x)\circ \mathbf F,\qquad \mathbf F \circ Q(\mathbf y) = Q(\mathbf y)\circ \mathbf F\ .
\end{equation}
\end{proposition}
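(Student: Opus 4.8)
The plan is to obtain the first identity by a one-line subtraction from Lemma~\ref{commEVIII}, and the second identity from the first by exploiting the symmetry of $\mathbf F$ under exchanging the two copies of $V$. For the first identity, since the commutator bracket is linear, subtract the two relations of Lemma~\ref{commEVIII}:
\[
\left[\mathbf F,\ Q(\mathbf x)\right]=\left[\mathbf E_{(VIII)},\ Q(\mathbf x)\right]-\left[Q\left(\tfrac{\partial}{\partial \mathbf x},\tfrac{\partial}{\partial \mathbf y}\right),\ Q(\mathbf x)\right]=2\,\mathbf E\left(\mathbf x,\tfrac{\partial}{\partial \mathbf y}\right)-2\,\mathbf E\left(\mathbf x,\tfrac{\partial}{\partial \mathbf y}\right)=0,
\]
which is exactly $\mathbf F\circ Q(\mathbf x)=Q(\mathbf x)\circ \mathbf F$.

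For the second identity, I would introduce the linear involution $(\mathbf x,\mathbf y)\mapsto(\mathbf y,\mathbf x)$ of $V\times V$ and let $\tau$ denote the induced automorphism of the Weyl algebra of $V\times V$, which interchanges $x_j\leftrightarrow y_j$ and $\tfrac{\partial}{\partial x_j}\leftrightarrow\tfrac{\partial}{\partial y_j}$ for all $j$. A relabeling of the summation indices shows that $\tau$ fixes $Q\left(\tfrac{\partial}{\partial \mathbf x},\tfrac{\partial}{\partial \mathbf y}\right)=\sum_{j=1}^n\pm\,\tfrac{\partial^2}{\partial x_j\partial y_j}$ (the signs being those of $Q$), and that it also fixes $\mathbf E_{(VIII)}$: the scalar coefficient $Q(\mathbf x,\mathbf y)^{-1}$ is symmetric, while $\tau$ carries $\sum_{j,k}x_jy_k\,\tfrac{\partial^2}{\partial x_k\partial y_j}$ to $\sum_{j,k}y_jx_k\,\tfrac{\partial^2}{\partial y_k\partial x_j}$, which is the same expression after renaming $j\leftrightarrow k$. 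Hence $\tau(\mathbf F)=\mathbf F$. Since $\tau$ is an algebra automorphism with $\tau\big(Q(\mathbf x)\big)=Q(\mathbf y)$, applying $\tau$ to the identity $[\mathbf F,Q(\mathbf x)]=0$ yields $[\mathbf F,Q(\mathbf y)]=0$, i.e. $\mathbf F\circ Q(\mathbf y)=Q(\mathbf y)\circ\mathbf F$.

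There is essentially no hard step: once Lemma~\ref{commEVIII} is granted, everything reduces to bookkeeping for the involution $\tau$. The only point deserving a moment's care is the verification that $\mathbf E_{(VIII)}$ is genuinely $\tau$-invariant, including the placement of the non-constant coefficient $Q(\mathbf x,\mathbf y)^{-1}$ relative to the differentiations, since it is the only term of $\mathbf E_{\lambda,\mu}$ with a truly two-sided $(\mathbf x,\mathbf y)$-structure. As an alternative to the use of $\tau$, one may prove $[\mathbf F,Q(\mathbf y)]=0$ directly, by establishing $\left[\mathbf E_{(VIII)},Q(\mathbf y)\right]=2\,\mathbf E\left(\mathbf y,\tfrac{\partial}{\partial \mathbf x}\right)$ and $\left[Q\left(\tfrac{\partial}{\partial \mathbf x},\tfrac{\partial}{\partial \mathbf y}\right),Q(\mathbf y)\right]=2\,\mathbf E\left(\mathbf y,\tfrac{\partial}{\partial \mathbf x}\right)$ by the computation proving Lemma~\ref{commEVIII} with the roles of $\mathbf x$ and $\mathbf y$ interchanged, and subtracting.
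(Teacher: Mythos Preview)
Your proof is correct and follows the same approach as the paper: subtract the two identities of Lemma~\ref{commEVIII} to get $[\mathbf F,Q(\mathbf x)]=0$, then obtain the second identity by exchanging $\mathbf x$ and $\mathbf y$. You have simply made the symmetry argument more explicit via the involution $\tau$, which the paper leaves as a one-line remark.
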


\begin{proof} Use Lemma \ref{commEVIII},  the second half of the statement being obtained by exchanging $\mathbf x$ and $\mathbf y$.
\end{proof}

\begin{proposition}\label{Ftan2}
 Let $f$ be a smooth function  on $(\Xi\times \Xi)^\times$. Let $\overline f$ be a smooth extension of $f$  to a conical neighborhood $\mathcal O$ of $(\Xi\times \Xi)^\times$ in $(V\times V)^\times$. The restriction of $\mathbf F( \overline f)$ to $(\Xi\times \Xi)^\times$  depends only on $f$ and not of the particular extension used.
\end{proposition}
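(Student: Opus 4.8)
The plan is to reduce the independence-of-extension statement to the tangency property already recorded in Proposition~\ref{Ftan}. The point is that two smooth extensions $\overline f_1, \overline f_2$ of $f$ to a conical neighborhood $\mathcal O$ of $(\Xi\times\Xi)^\times$ differ by a function that vanishes on $(\Xi\times\Xi)^\times$, and I want to show that $\mathbf F$ applied to such a function again vanishes on $(\Xi\times\Xi)^\times$. So it suffices to prove: if $g\in C^\infty(\mathcal O)$ vanishes on $(\Xi\times\Xi)^\times = \{(\mathbf x,\mathbf y)\in\mathcal O : Q(\mathbf x)=Q(\mathbf y)=0\}$, then $\mathbf F(g)$ vanishes there as well.

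First I would establish the relevant division lemma: if $g\in C^\infty(\mathcal O)$ vanishes on the set where $Q(\mathbf x)=Q(\mathbf y)=0$, then there exist smooth functions $g_1, g_2$ on (a possibly smaller) conical neighborhood such that
\[
g = Q(\mathbf x)\, g_1 + Q(\mathbf y)\, g_2 .
\]
This is a standard consequence of the fact that, near any point of $(\Xi\times\Xi)^\times$, the functions $Q(\mathbf x)$ and $Q(\mathbf y)$ form part of a coordinate system transverse to the submanifold: since $dQ(\mathbf x)\neq 0$ on $\Xi$ and likewise for $\mathbf y$, and the two differentials are independent (they involve disjoint sets of variables), the zero set is a smooth submanifold cut out transversally by $Q(\mathbf x)$ and $Q(\mathbf y)$, so Hadamard's lemma (applied in the two transverse directions) yields such a decomposition locally; a partition of unity argument, compatible with the conical structure, globalizes it over $\mathcal O$. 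I would state this as an auxiliary lemma and give the Hadamard-lemma argument in a sentence or two.

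With the decomposition in hand, I apply $\mathbf F$ and use Proposition~\ref{Ftan}:
\[
\mathbf F(g) = \mathbf F\bigl(Q(\mathbf x) g_1\bigr) + \mathbf F\bigl(Q(\mathbf y) g_2\bigr)
 = Q(\mathbf x)\, \mathbf F(g_1) + Q(\mathbf y)\, \mathbf F(g_2),
\]
since $\mathbf F\circ Q(\mathbf x) = Q(\mathbf x)\circ\mathbf F$ and $\mathbf F\circ Q(\mathbf y) = Q(\mathbf y)\circ\mathbf F$ as operators. Evaluating on a point of $(\Xi\times\Xi)^\times$, both $Q(\mathbf x)$ and $Q(\mathbf y)$ vanish, hence $\mathbf F(g)$ vanishes on $(\Xi\times\Xi)^\times$. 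Applying this to $g = \overline f_1 - \overline f_2$ gives $\mathbf F(\overline f_1)|_{(\Xi\times\Xi)^\times} = \mathbf F(\overline f_2)|_{(\Xi\times\Xi)^\times}$, which is the claim.

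The only genuine subtlety is the division lemma — one must be slightly careful that the decomposition $g = Q(\mathbf x)g_1 + Q(\mathbf y)g_2$ can be arranged with $g_1,g_2$ smooth on a full conical neighborhood (not merely locally) and, if one wants the operator $\mathbf D^{reg}$ to preserve homogeneity, that the pieces can be taken homogeneous of the appropriate degrees; this follows by averaging $g_1, g_2$ against the two one-parameter dilation groups, exactly as in the proof of Proposition~\ref{sqxi}. Everything else is immediate from Proposition~\ref{Ftan}. I expect the write-up to be short: state the division lemma, cite Hadamard, then the one-line computation above.
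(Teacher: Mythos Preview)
Your argument is correct and follows the same route as the paper: reduce to an extension vanishing on $(\Xi\times\Xi)^\times$, write it locally as $Q(\mathbf x)g_1+Q(\mathbf y)g_2$ via Hadamard division, and apply the commutation relations of Proposition~\ref{Ftan}. The only difference is cosmetic (the paper states the decomposition in a conical neighborhood of a single ray rather than invoking a partition of unity, and your remark about preserving homogeneity is extraneous to this proposition).
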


\begin{proof} It is enough to prove that if $\overline f$ vanishes on $(\Xi\times \Xi)^\times$, then $\mathbf F (\overline f)$ vanishes on $(\Xi\times \Xi)^\times$. Now such a function $\overline f$ can be written (in a conical neighborhood $\mathcal O$ of a given ray $\mathbb R^* (\mathbf x_0, \mathbf y_0)\subset (\Xi\times \Xi)^\times$) as
\[{\overline f}(\mathbf x, \mathbf y) = Q(\mathbf x)g(\mathbf x, \mathbf y) + Q(\mathbf y) h(\mathbf x, \mathbf y)\ .
\]
where $g$ and $h$ are smooth functions on $\mathcal O$. From Lemma \ref{commEVIII} follows
\[\left[\mathbf F\ ,\  Q(\mathbf x)\right] = \left[\mathbf F\ ,\  Q(\mathbf y)\right] 
=0\]
and hence
\[\mathbf F (\overline f)= Q(\mathbf x) \mathbf F (g) + Q(\mathbf y) \mathbf F(h)\ .
\]
When $(\mathbf x, \mathbf y)$ belongs to $(\Xi\times \Xi)^\times$, then $Q(\mathbf x) = Q(\mathbf y) = 0$ and the proposition follows.
\end{proof}
Consider now the following decomposition 
\[\mathbf E_{\lambda, \mu} = \mathbf E_{\lambda, \mu}^{sing}+ \mathbf E_{\lambda, \mu} ^{reg}
\]
where
\[\mathbf E_{\lambda, \mu}^{sing} = (I)+\dots+(VII) + 2(-\frac{n}{2} +1+\lambda)(-\frac{n}{2}+\mu)\, \mathbf F +(X)
\]
\[\mathbf E_{\lambda, \mu} ^{reg} = 2(-\frac{n}{2} +1+\lambda)(-\frac{n}{2}+\mu)\ Q\left(\frac{\partial}{\partial \mathbf x}, \frac{\partial}{\partial \mathbf y}\right)+ (IX)+ (XI) + (XII)
\]
\begin{proposition}\label{Esingtan}
 Let $\mathcal O$ be a conical neighborhood of $(\Xi\times \Xi)^\times$, and let $f\in \mathcal F_{(\lambda, \epsilon),(\mu, \eta)}(\mathcal O)$. Then the restriction of $\mathbf E_{\lambda, \mu}^{reg} f$ to $(\Xi\times \Xi) ^{reg}$ only depends on the restriction of $f$ to $(\Xi\times \Xi)^{reg}$.
\end{proposition}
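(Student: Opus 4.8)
The strategy is to reduce the statement about $\mathbf E_{\lambda,\mu}^{reg}$ to the already-established tangentiality properties. Recall from Proposition \ref{explicitElambdamu} that $\mathbf E_{\lambda,\mu}$ splits as $\mathbf E_{\lambda,\mu}^{sing}+\mathbf E_{\lambda,\mu}^{reg}$, and from Proposition \ref{Elambdamutan} we know that $\mathbf E_{\lambda,\mu}$ itself induces a well-defined operator on the relevant homogeneous function spaces, i.e.\ its restriction to $(\Xi\times\Xi)^\times$ depends only on the restriction of $f$ to $(\Xi\times\Xi)^\times$. Therefore it suffices to prove the same statement for $\mathbf E_{\lambda,\mu}^{sing}$; the claim for $\mathbf E_{\lambda,\mu}^{reg}$ then follows by difference. (I will use the notation $(\Xi\times\Xi)^{reg}=(\Xi\times\Xi)^\times$, as the two evidently coincide in this section.)

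So the plan is: let $\overline f\in\mathcal F_{(\lambda,\epsilon),(\mu,\eta)}(\mathcal O)$ vanish on $(\Xi\times\Xi)^\times$, and show $\mathbf E_{\lambda,\mu}^{sing}\overline f$ vanishes there as well. As in the proof of Proposition \ref{Ftan2}, near any ray in $(\Xi\times\Xi)^\times$ we may write $\overline f = Q(\mathbf x)\,g + Q(\mathbf y)\,h$ with $g,h$ smooth. Now inspect $\mathbf E_{\lambda,\mu}^{sing}=(I)+\dots+(VII)+2(-\tfrac n2+1+\lambda)(-\tfrac n2+\mu)\mathbf F+(X)$ term by term. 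The terms $(I),(II),(III),(VII),(X)$ all carry an explicit factor of $Q(\mathbf x,\mathbf y)^{-1}$ times either $Q(\mathbf x)$, $Q(\mathbf y)$, or a first-order operator in the $\mathbf x$- or $\mathbf y$-variables, composed with things; but more to the point, each of these terms is a multiplication operator or a first-order operator whose "outer" polynomial factor already vanishes on $\Xi\times\Xi$ — one checks directly that applying each of $(I),\dots,(VII),(X)$ to $Q(\mathbf x)g+Q(\mathbf y)h$ produces an expression in which every summand retains a factor of $Q(\mathbf x)$ or $Q(\mathbf y)$. For the purely differential pieces this uses the Leibniz rule: e.g.\ $(VII)=2(-\tfrac n2+2+\lambda)(-\tfrac n2+1+\mu)Q(\mathbf x,\mathbf y)^{-1}\mathbf E(\mathbf x,\tfrac{\partial}{\partial\mathbf x})$ applied to $Q(\mathbf x)g$ gives $Q(\mathbf x,\mathbf y)^{-1}$ times $2Q(\mathbf x)g$ plus $Q(\mathbf x)$ times (derivatives of $g$), since $\mathbf E(\mathbf x,\tfrac{\partial}{\partial\mathbf x})Q(\mathbf x)=2Q(\mathbf x)$. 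The analogous bookkeeping disposes of $(II)$–$(VI)$. Finally, the $\mathbf F$-term is handled by Proposition \ref{Ftan} (equivalently the vanishing of $[\mathbf F,Q(\mathbf x)]$ and $[\mathbf F,Q(\mathbf y)]$ from Lemma \ref{commEVIII}): $\mathbf F(Q(\mathbf x)g+Q(\mathbf y)h)=Q(\mathbf x)\mathbf F(g)+Q(\mathbf y)\mathbf F(h)$, which vanishes on $(\Xi\times\Xi)^\times$ exactly as in Proposition \ref{Ftan2}.

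Once $\mathbf E_{\lambda,\mu}^{sing}\overline f$ is shown to vanish on $(\Xi\times\Xi)^\times$ whenever $\overline f$ does, the operator $\mathbf E_{\lambda,\mu}^{sing}$ induces a well-defined operator on restrictions; subtracting from the analogous fact for $\mathbf E_{\lambda,\mu}$ (Proposition \ref{Elambdamutan}) gives the result for $\mathbf E_{\lambda,\mu}^{reg}=\mathbf E_{\lambda,\mu}-\mathbf E_{\lambda,\mu}^{sing}$. I expect the main obstacle to be purely organizational: the term $(VIII)$ was deliberately rewritten via $\mathbf F$ precisely because a naive application of $(VIII)$ to $Q(\mathbf x)g+Q(\mathbf y)h$ does \emph{not} obviously retain a $Q(\mathbf x)$ or $Q(\mathbf y)$ factor — the commutator $[\mathbf E_{(VIII)},Q(\mathbf x)]=2\mathbf E(\mathbf x,\tfrac{\partial}{\partial\mathbf y})$ is a genuine first-order operator that does not vanish on $\Xi\times\Xi$. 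The whole point of the $\mathbf F$ splitting is that this troublesome first-order term is cancelled by the matching commutator of $Q(\tfrac{\partial}{\partial\mathbf x},\tfrac{\partial}{\partial\mathbf y})$, and the residual $Q(\tfrac{\partial}{\partial\mathbf x},\tfrac{\partial}{\partial\mathbf y})$-piece is shunted into $\mathbf E_{\lambda,\mu}^{reg}$, where it becomes part of the genuinely regular (transverse-order-lowering) operator. Verifying that the coefficient $2(-\tfrac n2+1+\lambda)(-\tfrac n2+\mu)$ is exactly the one appearing in front of both $\mathbf F$ in $\mathbf E_{\lambda,\mu}^{sing}$ and $Q(\tfrac{\partial}{\partial\mathbf x},\tfrac{\partial}{\partial\mathbf y})$ in $\mathbf E_{\lambda,\mu}^{reg}$ — so that the two descriptions are consistent with $\mathbf E_{\lambda,\mu}^{sing}+\mathbf E_{\lambda,\mu}^{reg}=\mathbf E_{\lambda,\mu}$ and with term $(VIII)$ of Proposition \ref{explicitElambdamu} — is the one point requiring care.
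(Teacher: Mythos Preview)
Your proposal is correct and follows essentially the same route as the paper: reduce to $\mathbf E_{\lambda,\mu}^{sing}$ via Proposition~\ref{Elambdamutan}, then check term by term that $\mathbf E_{\lambda,\mu}^{sing}$ annihilates functions vanishing on $(\Xi\times\Xi)^\times$, with the $\mathbf F$-piece handled by Proposition~\ref{Ftan}/\ref{Ftan2}. The only minor difference is organizational: for terms $(III)$, $(IV)$, $(VII)$ the paper invokes the homogeneity of $f$ directly (so that $\mathbf E(\mathbf y,\partial/\partial\mathbf y)f=-\mu f$, etc.), whereas you use the local factorization $\overline f = Q(\mathbf x)g+Q(\mathbf y)h$ together with the Leibniz rule uniformly for all of $(I)$--$(VII)$, $(X)$; both arguments are valid and lead to the same conclusion.
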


\begin{proof}From Proposition \ref{Elambdamutan}, it is equivalent to prove the similar statement for $\mathbf E_{\lambda,\mu}^{sing}$. As already argued above, it is suffisant to prove that if $f$ vanishes on $(\Xi\times \Xi)^\times$, then $\mathbf E_{\lambda,\mu}^{sing} f$ vanishes on $(\Xi\times \Xi)^\times$.  In the expression of $\mathbf E_{\lambda, \mu}^{sing}(f)$, terms corresponding to factors (I), (II), (V), (VI), (X) vanish on $(\Xi\times \Xi)^{reg}$ as they contain either a factor $Q(\mathbf x)$ or $Q(\mathbf y)$. Terms (III), (IV) and (VII) when evaluated on $f$ vanish on $(\Xi\times \Xi)^{reg}$. This is trivially true  for term (III). For terms (IV) and (VII), use the homogeneity condition of $f$ to justify the statement. The extra term in the definition of $\mathbf E_{\lambda, \mu}^{sing}$ is proportional to $\mathbf F$. But $\mathbf F(f)$ vanishes on $(\Xi\times \Xi)^\times$  as a consequence of Proposition \ref{Ftan2}. This achieves the proof.
\end{proof}
\begin{theorem} The operator $\mathbf E_{\lambda, \mu}^{reg}$ induces a map from $\mathcal H_{(\lambda, \epsilon), (\mu, \eta)}$ into \break $\mathcal H_{(\lambda+1,-\epsilon), (\mu+1,- \eta)}$.
Viewed as an operator on sections of  line bundles over $\Xi\times \Xi$, it is a differential operator $\mathbf D^{reg}_{(\lambda,\epsilon), (\mu,\eta)}$ which intertwines the representations $\pi_{\lambda, \epsilon} \otimes \pi_{\mu,\eta}$ and $\pi_{\lambda+1,-\epsilon} \otimes \pi_{\mu+1,-\eta}$.
\end{theorem}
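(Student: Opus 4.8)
The plan is to read the required properties of $\mathbf E^{reg}_{\lambda,\mu}$ off the constructions already in place, the one genuinely new observation being that, by contrast with $\mathbf E_{\lambda,\mu}$ itself, the operator $\mathbf E^{reg}_{\lambda,\mu}$ has \emph{polynomial} coefficients: its four constituents $Q\!\left(\frac{\partial}{\partial\mathbf x},\frac{\partial}{\partial\mathbf y}\right)$, $(IX)$, $(XI)$, $(XII)$ have coefficients that are constants, the linear forms $x_j$ and $y_k$, or the quadratic form $Q(\mathbf x,\mathbf y)$; no power $\vert Q(\mathbf x,\mathbf y)\vert^{\rho}$ and no $\sgn Q(\mathbf x,\mathbf y)$ survive, so the absolute-value and sign subtleties that affected $\mathbf E_{\lambda,\mu}$ never arise here. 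Hence $\mathbf E^{reg}_{\lambda,\mu}$ is a genuine differential operator on the whole of $V\times V$. Using this, I would first show that it induces a well-defined map on the space $\mathcal H_{(\lambda,\epsilon),(\mu,\eta)}$ of smooth homogeneous functions on $\Xi\times\Xi$: given such an $F$ and a homogeneous extension $\overline F\in\mathcal F_{(\lambda,\epsilon),(\mu,\eta)}(\mathcal O)$ to a conical neighborhood $\mathcal O$ of $\Xi\times\Xi$ in $V\times V$, the function $\mathbf E^{reg}_{\lambda,\mu}\overline F$ is smooth on $\mathcal O$; if $\overline F$ vanishes on $\Xi\times\Xi$ then it vanishes on the dense open subset $(\Xi\times\Xi)^\times$, so by Proposition~\ref{Esingtan} $\mathbf E^{reg}_{\lambda,\mu}\overline F$ vanishes on $(\Xi\times\Xi)^\times$, hence — by continuity and density — on all of $\Xi\times\Xi$. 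Therefore $(\mathbf E^{reg}_{\lambda,\mu}\overline F)_{\vert\Xi\times\Xi}$ depends only on $F$, which defines $\mathbf D^{reg}_{(\lambda,\epsilon),(\mu,\eta)}F$. This is exactly the step where the polynomial coefficients matter: they are what allow the conclusion of Proposition~\ref{Esingtan}, a priori valid only over $(\Xi\times\Xi)^\times$, to be propagated across the singular locus $Q(\mathbf x,\mathbf y)=0$.

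Next I would pin down the target and verify the differential-operator claim. Since $\frac{\partial}{\partial x_j}$ raises the first homogeneity parameter by $1$ and flips $\epsilon$, while multiplication by $x_j$ lowers it by $1$ and flips $\epsilon$ (and similarly in the $\mathbf y$-variables for $(\mu,\eta)$), a monomial $\mathbf x^\alpha\mathbf y^\beta\partial_{\mathbf x}^\gamma\partial_{\mathbf y}^\delta$ changes the data $(\lambda,\epsilon),(\mu,\eta)$ into $(\lambda+|\gamma|-|\alpha|,(-1)^{|\gamma|+|\alpha|}\epsilon),(\mu+|\delta|-|\beta|,(-1)^{|\delta|+|\beta|}\eta)$. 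One checks that every monomial occurring in $\mathbf E^{reg}_{\lambda,\mu}$ satisfies $|\gamma|-|\alpha|=|\delta|-|\beta|=1$; hence $\mathbf E^{reg}_{\lambda,\mu}$ maps $\mathcal F_{(\lambda,\epsilon),(\mu,\eta)}(\mathcal O)$ into $\mathcal F_{(\lambda+1,-\epsilon),(\mu+1,-\eta)}(\mathcal O)$, and together with the first step $\mathbf D^{reg}_{(\lambda,\epsilon),(\mu,\eta)}$ maps $\mathcal H_{(\lambda,\epsilon),(\mu,\eta)}$ into $\mathcal H_{(\lambda+1,-\epsilon),(\mu+1,-\eta)}$. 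Finally, $\mathbf E^{reg}_{\lambda,\mu}$ is a differential operator on $V\times V$ preserving the ideal of smooth functions vanishing on $\Xi\times\Xi$ (that is what the first step established), and $\Xi\times\Xi$ is cut out by the regular equations $Q(\mathbf x)=Q(\mathbf y)=0$; so $\mathbf E^{reg}_{\lambda,\mu}$ descends to a differential operator on $\Xi\times\Xi$, which in view of its bi-homogeneity is a differential operator on the corresponding line bundles over $X\times X$ in the sense of Section~1.

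It remains to obtain the intertwining property, and for this it is enough to show that $\mathbf E^{reg}_{\lambda,\mu}$ commutes with the diagonal action of $G$ on $V\times V$: one may then compute $\mathbf D^{reg}_{(\lambda,\epsilon),(\mu,\eta)}\big(\pi_{\lambda,\epsilon}\otimes\pi_{\mu,\eta}(g)F\big)$ using the homogeneous extension $\overline F\circ g^{-1}$ and restrict to $\Xi\times\Xi$ to get $\pi_{\lambda+1,-\epsilon}\otimes\pi_{\mu+1,-\eta}(g)\big(\mathbf D^{reg}_{(\lambda,\epsilon),(\mu,\eta)}F\big)$. Each constituent of $\mathbf E^{reg}_{\lambda,\mu}$ is diagonally $G$-invariant: $\square\!\left(\frac{\partial}{\partial\mathbf x}\right)$, $\square\!\left(\frac{\partial}{\partial\mathbf y}\right)$, $Q\!\left(\frac{\partial}{\partial\mathbf x},\frac{\partial}{\partial\mathbf y}\right)$ and multiplication by $Q(\mathbf x,\mathbf y)$ are invariant because $Q$ — hence the function $Q(\mathbf x,\mathbf y)$ — is $G$-invariant; and the first-order operators $\sum_j x_j\frac{\partial}{\partial y_j}$ and $\sum_j y_j\frac{\partial}{\partial x_j}$ occurring in $(XI)$ and $(IX)$ commute with the \emph{diagonal} action as well, since $\big(\sum_j x_j\frac{\partial}{\partial y_j}\big)f(\mathbf x,\mathbf y)=\frac{d}{dt}\big|_{t=0}f(\mathbf x,\mathbf y+t\mathbf x)$ and $g(\mathbf y+t\mathbf x)=g\mathbf y+t\,g\mathbf x$ for any linear $g$. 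Hence $\mathbf E^{reg}_{\lambda,\mu}$ commutes with the diagonal $G$-action on $(V\times V)^\times$, and — having polynomial coefficients, with $(V\times V)^\times$ dense — on all of $V\times V$.

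The genuinely substantial input is not in this theorem but behind it, namely Proposition~\ref{Esingtan} (equivalently, the tangentiality of $\mathbf E^{reg}_{\lambda,\mu}$ along $\Xi\times\Xi$), which is where Lemma~\ref{commEVIII} and the replacement of the term $(VIII)$ by the combination $\mathbf F$ do the work. With that granted, the only non-routine step above is the density argument of the first paragraph, whose whole purpose is to exploit that $\mathbf E^{reg}_{\lambda,\mu}$ — unlike $\mathbf E_{\lambda,\mu}$ or $\mathbf E^{sing}_{\lambda,\mu}$ — has no singularity along $Q(\mathbf x,\mathbf y)=0$; the rest is bookkeeping of homogeneity degrees and of diagonal $G$-invariance.
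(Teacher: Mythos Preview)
Your argument is correct and follows the same line as the paper's own proof: observe that $\mathbf E^{reg}_{\lambda,\mu}$ has polynomial coefficients, invoke Proposition~\ref{Esingtan} on the dense open set $(\Xi\times\Xi)^\times$, pass to all of $\Xi\times\Xi$ by continuity, and read off the intertwining relation from diagonal $G$-invariance. You supply more detail than the paper (the explicit homogeneity bookkeeping and the pleasant one-parameter-group argument for the invariance of $\sum_j x_j\partial_{y_j}$), but the structure is identical.
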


\begin{proof} Let $f\in \mathcal H_{(\lambda,\epsilon), (\mu,\eta)}$, extend it, respecting the homogeneities, to a neighborhood of $\Xi\times \Xi$, still denoted by $f$. By Proposition 
\ref{Esingtan} the value of $\mathbf E_{\lambda, \mu}^{reg}f$ on $(\Xi\times \Xi)^{reg}$ depends only on the values of $f$ on $(\Xi\times \Xi)^{reg}$. The operator $\mathbf E_{\lambda,\mu}^{reg}$ has polynomial coefficients on $V\times V$. Hence by continuity, the value of $\mathbf E_{\lambda,\mu}^{reg}f$ at a point in $(\Xi\times \Xi)$ is well defined and depends only on the values of $f$ on $\Xi\times \Xi$. The invariance property of $\mathbf E_{\lambda, \mu}^{reg}$ with respect to the diagonal action of $G$ on $V\times V$ follows immediately from the definition of $\mathbf E_{\lambda, \mu}^{reg}$ and implies the intertwining relation for $\mathbf D_{(\lambda,\epsilon), (\mu,\eta)}^{reg}$.
\end{proof}

Now notice that  the term (IX)  can be rewritten using 
\begin{equation}\label{modifIX}
\sum_{k=1}^n y_n \frac{\partial}{\partial x_n} =  \sum_{k=1}^n (y_k-x_k) \frac{\partial}{\partial x_k} + \mathbf E\left(\mathbf x, \frac{\partial}{\partial \mathbf x}\right)\ .
\end{equation}
A similar modification is possible for  the term (XI). 

Define the operator 
\[\mathbf F_{\lambda, \mu} = \] 
\[Q(\mathbf x, \mathbf y)\,\circ Q\left(\frac{ \partial}{\partial \mathbf y}\right)\circ  Q\left(\frac{ \partial}{\partial \mathbf x}\right)
\]
\[+2(-\frac{n}{2} +1+\mu)  \left(\sum_{j=1}^n (x_k-y_k)\frac{\partial}{\partial y_k}\right) \circ Q\left( \frac{ \partial}{\partial \mathbf x}\right)
\]
\[+2(-\frac{n}{2} +1+\lambda) \left(\sum_{j=1}^n (y_k-x_k) \frac{\partial}{\partial x_k}\right) \circ Q\left( \frac{ \partial}{\partial \mathbf y}\right)
\]
\[-2\mu (-\frac{n}{2} +1+\mu)\ Q\left( \frac{\partial}{\partial \mathbf x}\right)-2 \lambda (-\frac{n}{2} +1+\lambda)\ Q\left( \frac{\partial}{\partial \mathbf y}\right)
\]
\[+4(-\frac{n}{2}+1+\lambda)(-\frac{n}{2}+1+\mu)\ Q\left(\frac{\partial}{\partial \mathbf x}, \frac{\partial}{\partial \mathbf y} \right)\ .
\]
\begin{proposition} Let $f\in \mathcal H_{(\lambda, \epsilon),(\mu,\eta)}$. Extend $f$ to a neighborhood of $\Xi\times \Xi$, preserving the homogeneities. Then the restriction of $\mathbf F_{\lambda, \mu} f$ to $\Xi\times \Xi$  does not depend on the particular extension of $f$ and is equal to $\mathbf D_{(\lambda,\epsilon),(\mu, \eta)}^{reg}f$.
\end{proposition}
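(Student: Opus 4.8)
The plan is to show that $\mathbf F_{\lambda,\mu}$ and $\mathbf E_{\lambda,\mu}^{reg}$, applied to a homogeneous extension of $f$, have the same restriction to $\Xi\times\Xi$, and then to invoke the preceding Theorem. So fix $f\in\mathcal H_{(\lambda,\epsilon),(\mu,\eta)}$ and a homogeneous extension $\overline f\in\mathcal F_{(\lambda,\epsilon),(\mu,\eta)}(\mathcal O)$ to a conical neighborhood $\mathcal O$ of $\Xi\times\Xi$. By the Theorem of this section, $\big(\mathbf E_{\lambda,\mu}^{reg}\overline f\big)_{|\Xi\times\Xi}$ is independent of the extension and equals $\mathbf D^{reg}_{(\lambda,\epsilon),(\mu,\eta)}f$. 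Hence it is enough to prove $\big(\mathbf F_{\lambda,\mu}\overline f\big)_{|\Xi\times\Xi}=\big(\mathbf E_{\lambda,\mu}^{reg}\overline f\big)_{|\Xi\times\Xi}$; the independence of the extension for $\mathbf F_{\lambda,\mu}$ is then automatic.

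The equality will follow from a decomposition in the Weyl algebra of $V\times V$ of the form
\[
\mathbf F_{\lambda,\mu}-\mathbf E_{\lambda,\mu}^{reg}=Q(\mathbf x)\circ A+Q(\mathbf y)\circ B+C\circ\Big(\mathbf E\big(\mathbf x,\tfrac{\partial}{\partial\mathbf x}\big)+\lambda\Big)+D\circ\Big(\mathbf E\big(\mathbf y,\tfrac{\partial}{\partial\mathbf y}\big)+\mu\Big),
\]
with $A,B,C,D$ in the Weyl algebra. Indeed, the last two summands annihilate $\overline f$: differentiating the homogeneity relation $\overline f(r\mathbf x,s\mathbf y)=r^{-\lambda,\epsilon}s^{-\mu,\eta}\overline f(\mathbf x,\mathbf y)$ at $r=s=1$ gives $\mathbf E\big(\mathbf x,\tfrac{\partial}{\partial\mathbf x}\big)\overline f=-\lambda\overline f$ and $\mathbf E\big(\mathbf y,\tfrac{\partial}{\partial\mathbf y}\big)\overline f=-\mu\overline f$; and the first two summands applied to $\overline f$ vanish on $\Xi\times\Xi$ since $Q(\mathbf x)=Q(\mathbf y)=0$ there.

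To produce the decomposition one starts from the two explicit formulas, using $\mathbf E_{\lambda,\mu}^{reg}=2(-\tfrac n2+1+\lambda)(-\tfrac n2+\mu)\,Q\big(\tfrac{\partial}{\partial\mathbf x},\tfrac{\partial}{\partial\mathbf y}\big)+(IX)+(XI)+(XII)$. In $(IX)$ one rewrites $\sum_k y_k\tfrac{\partial}{\partial x_k}$ by means of \eqref{modifIX}, and in $(XI)$ one rewrites $\sum_k x_k\tfrac{\partial}{\partial y_k}$ by the same identity with $\mathbf x$ and $\mathbf y$ interchanged. Since $\mathbf E\big(\mathbf x,\tfrac{\partial}{\partial\mathbf x}\big)$ commutes with $\square\big(\tfrac{\partial}{\partial\mathbf y}\big)$ (they involve disjoint sets of variables), one may push the Euler operator to the right,
\[
\mathbf E\big(\mathbf x,\tfrac{\partial}{\partial\mathbf x}\big)\circ\square\big(\tfrac{\partial}{\partial\mathbf y}\big)=\square\big(\tfrac{\partial}{\partial\mathbf y}\big)\circ\Big(\mathbf E\big(\mathbf x,\tfrac{\partial}{\partial\mathbf x}\big)+\lambda\Big)-\lambda\,\square\big(\tfrac{\partial}{\partial\mathbf y}\big),
\]
and symmetrically for the pair $\big(\mathbf E(\mathbf y,\tfrac{\partial}{\partial\mathbf y}),\square(\tfrac{\partial}{\partial\mathbf x})\big)$. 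Substituting and collecting, the terms produced are $\square\big(\tfrac{\partial}{\partial\mathbf x}\big)$, $\square\big(\tfrac{\partial}{\partial\mathbf y}\big)$, $\big(\sum_k(y_k-x_k)\tfrac{\partial}{\partial x_k}\big)\circ\square\big(\tfrac{\partial}{\partial\mathbf y}\big)$, $\big(\sum_k(x_k-y_k)\tfrac{\partial}{\partial y_k}\big)\circ\square\big(\tfrac{\partial}{\partial\mathbf x}\big)$, $Q(\mathbf x,\mathbf y)\,\square\big(\tfrac{\partial}{\partial\mathbf y}\big)\square\big(\tfrac{\partial}{\partial\mathbf x}\big)$ and $Q\big(\tfrac{\partial}{\partial\mathbf x},\tfrac{\partial}{\partial\mathbf y}\big)$ — precisely the building blocks of $\mathbf F_{\lambda,\mu}$ — together with leftover mixed second-order terms that one reabsorbs via the commutators of Lemma \ref{commEVIII} and Lemma \ref{SQG}, each of which carries a factor $Q(\mathbf x)$ or $Q(\mathbf y)$.

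The main obstacle is purely the bookkeeping: one must check that the coefficients actually match once the Euler eigenvalues $-\lambda,-\mu$ have been inserted, which amounts to reconciling the shifted quantities $(-\tfrac n2+2+\lambda)$ with $(-\tfrac n2+1+\lambda)$ (and likewise in $\mu$) — the same kind of long but straightforward manipulation as in the proof of Proposition \ref{explicitElambdamu}. The one point that genuinely requires care is that replacing an Euler operator by its eigenvalue is legitimate only when it stands to the left of an operator that does not change the relevant homogeneity degree; here this is the case because $\mathbf E\big(\mathbf x,\tfrac{\partial}{\partial\mathbf x}\big)$ occurs only composed with $\square\big(\tfrac{\partial}{\partial\mathbf y}\big)$ (or with multiplication by a function of $\mathbf y$ alone), and symmetrically for $\mathbf y$. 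Once the decomposition is in hand, both the identity $\mathbf D^{reg}_{(\lambda,\epsilon),(\mu,\eta)}f=\big(\mathbf F_{\lambda,\mu}\overline f\big)_{|\Xi\times\Xi}$ and the independence of the extension follow from the preceding Theorem.
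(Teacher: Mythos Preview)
Your approach is the paper's approach: rewrite terms $(IX)$ and $(XI)$ via \eqref{modifIX} (and its $\mathbf x\leftrightarrow\mathbf y$ analogue), then replace the Euler operators $\mathbf E\big(\mathbf x,\tfrac{\partial}{\partial\mathbf x}\big)$ and $\mathbf E\big(\mathbf y,\tfrac{\partial}{\partial\mathbf y}\big)$ by their eigenvalues $-\lambda$ and $-\mu$ on the homogeneous extension $\overline f$, and invoke the preceding Theorem for the identification with $\mathbf D^{reg}_{(\lambda,\epsilon),(\mu,\eta)}$.

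The one point where you go astray is the allowance for $Q(\mathbf x)\circ A+Q(\mathbf y)\circ B$ in your decomposition and the appeal to Lemmas~\ref{commEVIII} and~\ref{SQG} to absorb ``leftover mixed second-order terms''. No such leftovers occur: once \eqref{modifIX} has been applied and the Euler operators have been evaluated, the identity $\mathbf F_{\lambda,\mu}\overline f=\mathbf E_{\lambda,\mu}^{reg}\overline f$ holds as an equality of smooth functions on all of $\mathcal O$, not merely after restriction to $\Xi\times\Xi$. In other words, you may take $A=B=0$; Lemmas~\ref{commEVIII} and~\ref{SQG} play no role here (they were used earlier to engineer the operator $\mathbf F$ and the splitting $\mathbf E_{\lambda,\mu}=\mathbf E_{\lambda,\mu}^{sing}+\mathbf E_{\lambda,\mu}^{reg}$, not in the present step). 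This does not invalidate your argument---your decomposition is simply more general than necessary---but it obscures the fact that the two operators agree on $\overline f$ before restriction, which is what the paper actually asserts.
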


\begin{proof} Use \eqref{modifIX} and the homogeneities of the extension of $f$ to prove that
$\mathbf F_{\lambda, \mu} f$ coincides with $\mathbf E_{\lambda, \mu}^{reg} f$.
\end{proof}

\noindent
{\bf Remark.} The operator $\mathbf F_{\lambda, \mu}$ exhibits  symmetry with respect to the couples $(\mathbf x\leftrightarrow \mathbf y),(\lambda\leftrightarrow\mu)$ which did not exist for the initial operator $\mathbf E_{\lambda, \mu}$. In particular, it is possible to start with the operator
\[\sgn(Q(\mathbf x, \mathbf y)) \quad \times\]\[\vert Q(\mathbf x,\mathbf y)\vert^{+\frac{n}{2}-\lambda-1}\circ \square\left(\frac{\partial}{\partial \mathbf x}\right)\circ \vert Q(\mathbf x,\mathbf y)\vert^{\lambda-\mu} \circ \square\left(\frac{\partial}{\partial \mathbf y}\right)\circ \vert Q(\mathbf x, \mathbf y)\vert^{-\frac{n}{2}+2+\mu}\ ,
\]
which is \emph{not} equal to $\mathbf E_{\lambda, \mu}$. However, the  process of regularization produces the \emph{same} differential operator $\mathbf D_{(\lambda, \epsilon),(\mu,\eta)}^{reg}$ on $X\times X$.

\footnotesize{ \noindent Address\\  Institut \'Elie Cartan, Universit\'e de Lorraine 54506 Vand\oe uvre-l\`es Nancy (France)
\smallskip

\noindent \texttt{{jean-louis.clerc@univ-lorraine.fr 
}}

 \end{document}